\newcommand{\pp}[2]{\frac{\partial #1}{\partial #2}}
\newtheorem{definition}{Definition}[section]
\newtheorem{proposition}{Proposition}[section]
\theoremstyle{definition}
\newtheorem{remark}{Remark}[section]
\numberwithin{equation}{section}
\newcounter{savefootnote}
\newcounter{symfootnote}
\newcommand{\symfootnote}[1]{%
   \setcounter{savefootnote}{\value{footnote}}%
   \setcounter{footnote}{\value{symfootnote}}%
   \ifnum\value{footnote}>8\setcounter{footnote}{0}\fi%
   \let\oldthefootnote=\thefootnote%
   \renewcommand{\thefootnote}{\fnsymbol{footnote}}%
   \footnote{#1}%
   \let\thefootnote=\oldthefootnote%
   \setcounter{symfootnote}{\value{footnote}}%
   \setcounter{footnote}{\value{savefootnote}}%
}
\begin{document}
%
%
%
\begin{center}
{\Large An accurate SUPG-stabilized continuous Galerkin discretization for anisotropic heat flux in magnetic confinement fusion}
\end{center}
\vspace{-3mm}
\hrulefill
\begin{center}
{Golo A. Wimmer$^{1}$\symfootnote{correspondence to: gwimmer@lanl.gov}, Ben S. Southworth$^1$, Koki Sagiyama$^2$, Xian-Zhu Tang$^1$}\\
\vspace{2mm}
{\textit{$^1$Los Alamos National Laboratory}, \textit{$^2$Imperial College London}}\\
\vspace{4mm}
\today
\end{center}

\begin{abstract}
We present a novel spatial discretization for the anisotropic heat
conduction equation, aimed at improved accuracy at the high levels of
anisotropy seen in a magnetized plasma, for example, for magnetic
confinement fusion. The new discretization is based on a mixed
formulation, introducing a form of the directional derivative along
the magnetic field as an auxiliary variable and discretizing both the
temperature and auxiliary fields in a continuous Galerkin (CG)
space. Both the temperature and auxiliary variable equations are
stabilized using the streamline upwind Petrov-Galerkin (SUPG) method,
ensuring a better representation of the directional derivatives and
therefore an overall more accurate solution. This approach can be seen
as the CG-based version of our previous work (Wimmer, Southworth,
Gregory, Tang, 2024), where we considered a mixed discontinuous
Galerkin (DG) spatial discretization including DG-upwind
stabilization. We prove consistency of the novel discretization, and
demonstrate its improved accuracy over existing CG-based methods in
test cases relevant to magnetic confinement fusion. This includes a
long-run tokamak equilibrium sustainment scenario, demonstrating a 35\% and
32\% spurious heat loss for existing primal and mixed CG-based
formulations versus 4\% for our novel SUPG-stabilized discretization.
\end{abstract}
\textit{Keywords.} Anisotropic heat conduction, anisotropic diffusion, auxiliary operator, continuous Galerkin, SUPG
\section{Introduction}
Plasmas in magnetic confinement fusion exhibit extremely anisotropic
heat fluxes~\cite{jardin2010computational}, conducting heat
at a rate up to 10 orders of magnitude greater parallel to magnetic
field lines than perpendicular \cite{gunter2007finite}. If the
parallel heat flux is not represented accurately in numerical
simulations, the resulting increased cross-diffusion perpendicular to
magnetic field lines may lead to spuriously short energy confinement
times. One way to ensure such an accurate representation is to align
the computational mesh with magnetic flux surfaces
\cite{dudson2015bout++, hoelzl2021jorek, jardin2010computational};
however, this may not be possible in more complex magnetohydrodynamic
(MHD) simulations including magnetic islands or even stochastic
magnetic field lines \cite{freidberg1987ideal,
  sovinec2004nonlinear}. In practice, such simulations therefore
require numerical methods that accurately represent the parallel heat
flux independently of mesh alignment.

Due to the importance of anisotropic heat flux in extended MHD and
two-fluids simulations for magnetic fusion plasmas, a range of
numerical methods have been developed in recent years in order to
tackle this challenge. This includes, but is not limited to, improved
accuracy through higher polynomial order
discretizations~\cite{green2022efficient, green2024efficient}, mesh
refinement at areas of large expected cross-diffusion
errors~\cite{vogl2022mesh}, asymptotic preserving methods (and
extensions thereof), aimed at well-posedness in the limit of infinite
anisotropy strength~\cite{degond2012asymptotic, deluzet2019two,
  jin1999efficient, narski2014asymptotic, yang2019preserving,
  yang2024accuracy}, and first-order hyperbolic system methods
including auxiliary variables for the temperature's gradient
components together with pseudo-time advancement
terms~\cite{chamarthi2019first} (and references therein). Another
auxiliary variable-based approach is presented in \cite{gunter2005fd},
where the auxiliary variable represents the temperature's directional
derivative along magnetic field lines. The same idea is expanded on in
our previous work~\cite{wimmer2024fast}, where a mixed, discontinuous Galerkin
(DG) finite element-based method is derived based on classical
DG-upwinding as frequently employed for advection terms in
hydrodynamic problems. In particular, we showed the directional
gradient's improved representation resulting from transport
stabilization leads to greatly improved accuracy when compared to its
non-stabilized counterpart from \cite{gunter2005fd} in test cases with
extreme anisotropy.

Many of the aforementioned approaches are based on the finite element
method, which is often considered in magnetic confinement fusion
simulations due to, for instance, their flexibility in meshing complex
geometries. Next to the aforementioned DG-based work
in~\cite{green2022efficient, green2024efficient, wimmer2024fast}, this
includes additional DG-based approaches such as hybrid
DG~\cite{giorgiani2020high} and local
DG~\cite{held2016three}. However, DG-based methods may lead to
challenges such as an increased number of degree of freedoms (DOFs),
as well as an increased complexity in terms of implementation when
compared to continuous Galerkin (CG) methods. Further, DG-based
implementations often require interior penalty formulations, leading
to a potential loss in the order of accuracy. For these reasons, and
since magnetic confinement fusion scenarios generally involve a
shock-free low Mach number regime, many finite element-based MHD codes
used to simulate such regimes, such as~\cite{bonilla2023fully,
  breslau2009some, hoelzl2021jorek, sovinec2004nonlinear}, deploy CG
(or higher order regularity Galerkin) rather than DG methods.

In this paper, we present an extension of our previous DG-based
upwind-stabilized work in \cite{wimmer2024fast} to CG-based
discretizations. As in \cite{wimmer2024fast}, we consider a mixed
formulation of the anisotropic heat flux, using a form of the
temperature's directional gradient along magnetic field lines as the
auxiliary variable. Analogously to upwinding in the DG case, we apply
a form of the Streamline Upwind Petrov-Galerkin (SUPG)
method~\cite{brooks1982streamline} for the two directional derivatives
comprising the anisotropic diffusion operator. We motivate our
particular choice of SUPG formulation and further demonstrate
consistency of the resulting spatial discretization for the
anisotropic heat flux equation. Additionally, in a series of numerical
tests, we validate its order of accuracy as well as superior
performance in realistic test cases when compared to a primal weak
formulation and the mixed formulation of \cite{gunter2005fd}. In
particular, we study the spread of a temperature perturbation in a
2D magnetic flux surface, an MHD equilibrium in a full-torus tokamak
domain including a temperature-dependent parallel conductivity
coefficient, as well as the spread of a temperature perturbation
through a flux tube in the latter domain. This includes a long run full-torus
tokamak equilibrium scenario, demonstrating a 35\% and 32\% spurious
heat loss for existing primal and mixed CG-based formulations versus
4\% for our novel SUPG-stabilized discretization. Further, the tests
contain parameter scales suitable for MHD models, including the choice
of time step and parallel conductivity.

The remainder of this paper is structured as follows: In
Section~\ref{sec_background}, we review the anisotropic heat
conduction equation and include a brief description of standard
CG-based primal and mixed space discretizations. In
Section~\ref{sec_novelty}, we introduce our novel SUPG-based scheme,
including a discussion on its derivation as well as consistency and
the choice of SUPG stabilization parameter. In
Section~\ref{sec_Numerical_results}, we present and discuss numerical
results. Finally, in Section \ref{sec_conclusion}, we review our
results and discuss possible future work.
%
\section{Background} \label{sec_background}
In this section, we briefly review the anisotropic heat flux
equation. Further, we recall existing continuous Galerkin-based
methods for the equation's spatial discretization, with which we will
compare our novel method in the numerical results section.
%

\textbf{Anisotropic heat flux equation.} We consider the evolution of temperature $T$ within a plasma domain $\Omega$ as governed by a heat flux $\mathbf{q}$ and forcing $S$ according to
\begin{subequations}\label{T_eqn_orig}
\begin{align}
\pp{T}{t} - \nabla \cdot \mathbf{q} & = S,  \\
\mathbf{q} &= \kappa_{\parallel} \nabla_{\parallel} T + \kappa_{\perp} \nabla_{\perp} T, \label{q_orig}
\end{align}
\end{subequations}
for parallel and perpendicular heat conductivities $\kappa_\parallel$,
$\kappa_\perp$, respectively. The directional gradients are given by
\begin{equation}
\nabla_\parallel T= \mathbf{b} \big(\mathbf{b} \cdot \nabla
T\big), \hspace{1cm} \nabla_\perp T = \nabla T - \nabla_\parallel T,
\end{equation}
where $\mathbf{b} = \mathbf{B} / |\mathbf{B}|$ denotes the normalized
magnetic vector field determining the direction of anisotropy. In the
following, we consider a reformulated version of $\mathbf{q}$ that
contains an isotropic and a purely anisotropic part
\begin{equation}
\mathbf{q} = \kappa_\Delta \mathbf{b}(\mathbf{b} \cdot \nabla T) + \kappa_\perp \nabla T,
\end{equation}
where $\kappa_\Delta = \kappa_\parallel - \kappa_\perp$. Additionally, the equation is equipped with Dirichlet boundary conditions
\begin{equation}
T(\mathbf{x}, t) = T_{bc}(\mathbf{x}), \hspace{1cm} \mathbf{x} \in \partial \Omega, \label{Dirichlet_bc}
\end{equation}
where in magnetic confinement fusion, the domain's boundary is given
by the device's plasma-facing wall. Last, the ``forcing'' term $S$ contains
additional terms arising in the context of magnetohydrodynamic models
(see \eqref{p_eqn_full} in Appendix \ref{App_kappas}). In this
paper, we will for simplicity assume $S$ to be a known field, which is
a reasonable assumption for instance in view of split-time
discretizations in magnetohydrodynamics, where different dynamics --
including the heat flux -- are computed at separate stages.

Finally, we note that in practice, the heat conductivities
$\kappa_\Delta$, $\kappa_\perp$ may depend on e.g. the temperature,
density, and magnetic field, rendering the anisotropic heat conduction
problem nonlinear. For instance, under collisional closures of
Braginskii~\cite{braginskii1965transport}, the parallel conductivity
scales as
\begin{equation}
\kappa_\parallel \propto T^{5/2}; \label{kappa_par_scale}
\end{equation}
for details see Appendix \ref{App_kappas}. In the numerical results
section below, we will consider such temperature dependent
conductivities for one of the test cases, and use constant in time
coefficients otherwise.

%
\textbf{Existing spatial discretizations.} A straightforward
discretization for the anisotropic heat conduction
equation~\eqref{T_eqn_orig} can be derived by considering $T_h$ in a
$k^{th}$ polynomial order continuous Galerkin finite element space $\mathbb{V}^{\text{CG}}_k$
equipped with the Dirichlet boundary conditions \eqref{Dirichlet_bc}
\begin{equation}
T_h \in \mathbb{V}^{\text{CG}}_{k, bc} = \{\gamma \in \mathbb{V}^{\text{CG}}_k, \; \gamma |_{\partial \Omega} = T_{bc}\}, \\
\end{equation}
and discretizing the elliptic operators weakly.  This leads to
\begin{align}
\left\langle \gamma, \pp{T_h}{t} \right\rangle + \langle \mathbf{b} \cdot \nabla \gamma, \kappa_\Delta \mathbf{b} \cdot \nabla T_h \rangle + \langle \nabla \gamma, \kappa_\perp \nabla T_h \rangle = \langle \gamma, S\rangle && \forall \gamma \in \mathring{\mathbb{V}}^{\text{CG}}_k, \label{CG_straight}
\end{align}
where $\langle \cdot, \cdot \rangle$ denotes the $L^2$-inner product, and 
\begin{equation}
\mathring{\mathbb{V}}^{\text{CG}}_k = \{\gamma \in \mathbb{V}^{\text{CG}}_k, \; \gamma|_{\partial \Omega} = 0\}.
\end{equation}
At higher levels of anisotropy, the accuracy of \eqref{CG_straight} can deteriorate drastically due to an inadequate representation of the directional gradient $\mathbf{b} \cdot \nabla T$. This leads to a spurious leak of parallel heat flux in the perpendicular direction, and in order to avoid this, \cite{gunter2005fd} introduced an auxiliary variable to better represent the directional gradient. Given $T_h \in \mathbb{V}^{\text{CG}}_{k, bc}$, the scaled directional gradient $\zeta \coloneqq \sqrt{\kappa_\Delta}\mathbf{b} \cdot \nabla T$ can be argued to be a function in the discontinuous Galerkin space $\mathbb{V}^{\text{DG}}_{k-1}$, and the resulting mixed discretization is given by
\begingroup
\addtolength{\jot}{4mm}
\begin{subequations} \label{CG_mixed_Gunter}
\begin{align}
&\left\langle \gamma, \pp{T_h}{t} \right\rangle + \langle \mathbf{b} \cdot \nabla \gamma, \sqrt{\kappa_\Delta} \zeta_h \rangle + \langle \nabla \gamma, \kappa_\perp, \nabla T_h \rangle = \langle \gamma, S\rangle & \forall \gamma \in \mathring{\mathbb{V}}^{\text{CG}}_k, \label{CG_mixed_Gunter_T} \\
&\left\langle \phi, \zeta_h - \sqrt{\kappa_\Delta} \mathbf{b} \cdot \nabla T_h \right\rangle = 0 & \forall \phi \in \mathbb{V}^{\text{DG}}_{k-1}.
\end{align}
\end{subequations}
\endgroup

Due to the
ill conditioning (and corresponding time-step restriction for explicit integration) of discretized second derivatives, 
the anisotropic heat flux equation is typically discretized implicitly in time. In our numerical results section, we consider a midpoint rule in time for \eqref{CG_straight}, \eqref{CG_mixed_Gunter} as well as our novel space discretization to be introduced in the next section, although other more general implicit integration schemes could be considered. For scenarios with temperature dependent conductivity coefficients, this leads to a nonlinear system of equations, which can be solved using a Newton iteration method, or alternatively e.g., a quasi-Newton iteration approach with known estimates for the temperature field appearing in the coefficients.
%
\section{Discretization} \label{sec_novelty}
Having introduced the anisotropic heat flux equation as well as standard CG-based spatial discretizations thereof, we next present our novel upwind-stabilized method. For this purpose, we start with a brief motivation leading up to our method's definition, followed by a discussion of its properties.

\textbf{Motivation.} In \cite{wimmer2024fast}, we built on the
observation of \cite{gunter2005fd} that a better discrete
representation of the directional gradient $\mathbf{b} \cdot \nabla T$
is a key property to avoid spurious cross-diffusion. To improve this
representation, we constructed a DG-based version of the mixed scheme
\eqref{CG_mixed_Gunter}, using classical DG upwinding
\cite{kuzmin2010guide} to discretize the directional gradient. In
analogy, for CG-based schemes, one could consider an upwind-type
approach such as the Streamline Upwind Petrov Galerkin (SUPG)
method~\cite{brooks1982streamline}, where first, an additional term is
included to introduce diffusion in the direction of streamlines --
thereby targeting numerical noise introduced by advection. Second,
additional terms are included to ensure a consistent method. Put
together, the two changes can be achieved by considering a modified
test function space, leading to a Petrov-Galerkin method where test
and trial function spaces are different. For a simple transport
equation discretized in a CG space, this modification corresponds to
\begin{align}
\left\langle \gamma, \pp{\theta_h}{t} + \mathbf{u} \cdot \nabla \theta_h \right\rangle = 0  \;\;\; \overset{\text{SUPG}}{\longrightarrow} \;\;\; \left\langle \gamma + \tau \mathbf{u} \cdot \nabla \gamma, \pp{\theta_h}{t} + \mathbf{u} \cdot \nabla \theta_h \right\rangle = 0 &&\forall \gamma \in \mathring{\mathbb{V}}_k, \label{SUPG_adv}
\end{align}
for discrete tracer field $\theta_h$, advecting velocity field $\mathbf{u}$, and SUPG stabilization parameter $\tau$. In particular, the SUPG-modified equation is clearly still strongly consistent, in the sense that a solution $\theta$ to the strong equation $\pp{\theta}{t} + \mathbf{u} \cdot \nabla \theta = 0$ is also a solution to the SUPG weak form. Further, setting $\gamma = \theta_h$, we find a stabilizing effect
\begin{equation}
\frac{1}{2}\frac{d}{dt}\|\theta_h\|_2^2 = \dots - \|\sqrt{\tau} \mathbf{u} \cdot \nabla \theta_h \|_2^2.
\end{equation}

While upwind stabilization is well-motivated in hyperbolic problems, its use for the anisotropic diffusion equation is less immediately clear, as the latter equation is parabolic. To discuss this further, we note that the algebraic system of equations for the (implicitly discretized in time) mixed method \eqref{CG_mixed_Gunter} can be written as
\begin{equation}
\begin{pmatrix}
\frac{1}{\delta t} M_{CG} + L_\perp & D_\parallel^T \\
-D_\parallel & M_{DG}
\end{pmatrix}
\begin{pmatrix}
\mathbf{T}_c\\
\boldsymbol{\zeta}_c
\end{pmatrix}
=
\begin{pmatrix}
R_T \\
R_\zeta
\end{pmatrix}, \label{matrix_Gunter}
\end{equation}
for time step $\delta t$, CG and DG mass matrices $M_{CG}$, $M_{DG}$, respectively, discrete directional gradient $D_\parallel$ corresponding to $\sqrt{\kappa_\Delta} \mathbf{b} \cdot \nabla$ and discrete Laplacian $L_\perp$ weighted by $\kappa_\perp$, where strong Dirichlet boundary conditions for $T_h$ are applied appropriately to the matrix blocks. Further, we have degree of freedom coefficient vectors $\mathbf{T}_c$, $\boldsymbol{\zeta}_c$ to be solved for, and known right-hand side vectors $R_T$, $R_\zeta$. Additionally, we ignored terms occurring in the case of temperature-dependent conductivity coefficients. Eliminating the auxiliary variable, we obtain
\begin{equation}
\left(\frac{1}{\delta t}M_{CG} + D^T M_{DG}^{-1} D + L_\perp\right) \mathbf{T}_c = \dots, \label{Schur_Gunter}
\end{equation}
that is we recover a symmetric positive definite matrix $M_{CG} + D_\parallel^T M_{DG}^{-1} D_\parallel + L_\perp$ representing the parabolic problem, where $D_\parallel^T M_{DG}^{-1} D_\parallel$ denotes a discrete form of a strictly anisotropic diffusion operator along $\mathbf{b}$ with diffusion coefficient $\kappa_\Delta$. For the DG-upwind scheme introduced in \cite{wimmer2024fast}, the corresponding system of equations can be written as
\begin{equation}
\begin{pmatrix}
  \frac{1}{\delta t}M_{DG} + L_{DG,\perp} + M_{BC} && G_\parallel^T \\
  -G_\parallel && M_{DG}
\end{pmatrix}
\begin{pmatrix}
  \mathbf{T}_c \\
  \boldsymbol{\zeta}_c
\end{pmatrix}
=
\begin{pmatrix}
  R_T \\
  R_\zeta
\end{pmatrix}, \label{matrix_DG}
\end{equation}
where $L_{DG, \perp}$ is a suitable (symmetric) DG-based discretization of the Laplacian weighted by $\kappa_\perp$, and $G_\parallel$ is a DG-upwinding-based discretization of the directional gradient $\sqrt{\kappa_\Delta} \mathbf{b} \cdot \nabla$. Additionally, $M_{BC}$ is a boundary penalty term used to enforce the Dirichlet boundary conditions weakly. Similarly to \eqref{Schur_Gunter}, upon eliminating the auxiliary variable, we recover an operator for $\mathbf{T}_c$ that was shown in \cite{wimmer2024fast} to be symmetric positive definite. In other words, while upwinding is used for a better representation of the directional gradients occurring in the mixed formulation for the anisotropic diffusion equation, the overall method still represents an elliptic operator. With this in mind, we aim to obtain a similar result for CG-based methods including SUPG-stabilization for the directionaly gradients.

\textbf{SUPG-based method.} We next discuss our novel CG-based discretization, aiming for a stabilized form of the directional gradients while maintaining consistency and keeping in mind the discrete parabolic operator property discussed above. For this purpose, we first define suitable streamline upwind operators.
%
\begin{definition} \label{def_cg_SUPG_ops}
For magnetic field $\mathbf{B}$ and conductivity difference $\kappa_\Delta = \kappa_\parallel - \kappa_\perp$, we set the upwind velocity $\mathbf{s} \coloneqq \sqrt{\kappa_\Delta(T)} \mathbf{B}/|\mathbf{B}|$. For arbitrary functions $\chi$, $\upsilon$, the advective and flux-based SUPG-modified bilinear forms $M_a$, $M_f$ are defined by
\begingroup
\addtolength{\jot}{4mm}
\begin{subequations} \label{SUPG_operators}
\begin{align}
&M^a(\chi,\upsilon) \coloneqq \langle \chi + \tau \mathbf{s} \cdot \nabla \chi, \upsilon \rangle, \\
&M^f(\chi, \upsilon) \coloneqq \langle \chi, \upsilon + \mathbf{s} \cdot \nabla (\tau \upsilon) \rangle. \label{def_Mf}
\end{align}
\end{subequations}
\endgroup
Further, for arbitrary functions $\omega$, $\upsilon$, the SUPG-modified bilinear form $G^f_\parallel$ is defined by
\begin{equation}
G^f_\parallel(\omega, \upsilon) \coloneqq M^f(\mathbf{s} \cdot \nabla \omega, \upsilon).
\end{equation}
\end{definition}
Note that $M^a$, $M^f$, and $G^f_\parallel$ are well-defined after discretization if $\chi$, $\upsilon$, $\omega$ are functions of a CG space, and further provided that the stabilization parameter $\tau$ is continuous. After discretization, $M^a$ and $M^f$ both are forms corresponding to SUPG-modified mass matrices, and they reduce to a mass matrix as $\tau \rightarrow 0$. Further, $G^f_\parallel$ corresponds to an SUPG-modified directional gradient.

\begin{definition} \label{def_cg_SUPG}
The SUPG-modified, CG-based mixed discretization of the anisotropic heat flux equation \eqref{T_eqn_orig} for velocity $\mathbf{s} \coloneqq \sqrt{\kappa_\Delta(T)} \mathbf{B}/|\mathbf{B}|$ is given by finding $(T_h, \zeta_h) \in \mathbb{V}^{\text{CG}}_{k, bc} \times \mathbb{V}^{\text{CG}}_k$ such that
\begingroup
\addtolength{\jot}{4mm}
\begin{subequations} \label{SUPG_method}
\begin{align}
&M^a\left(\!\gamma, \pp{T_h}{t}\!\right) + G^f_\parallel(\gamma, \zeta_h) + \langle \nabla \gamma, \kappa_\perp \nabla T_h \rangle \!-\! \langle \tau \mathbf{s} \cdot \nabla \gamma, \nabla \cdot (\kappa_\perp \nabla T_h) \rangle \!=\! M^a(\gamma, S) & \forall \gamma \in \mathring{\mathbb{V}}^{CG}_k, \label{SUPG_method_T}\\
&M^f(\eta, \zeta_h) - G^f_\parallel(T_h, \eta) + \int_{\partial\Omega} \tau \eta (\mathbf{s} \cdot \nabla T_h)(\mathbf{n} \cdot \mathbf{s})dS = 0 &\forall \eta \in \mathbb{V}^{CG}_k, \label{SUPG_method_zeta}
\end{align}
\end{subequations}
\endgroup
for outward unit normal vector $\mathbf{n}$.
\end{definition}

\begin{proposition} \label{prop_CG_consistent}
The mixed discretization \eqref{SUPG_method} is consistent, in the sense that a strong solution $T$ to the non-discretized equation \eqref{T_eqn_orig} satisfies \eqref{SUPG_method} with a sufficiently regular test function space $\mathbb{V}_T \supset \mathbb{V}_k^{CG}$ and parameters $\kappa_\parallel$, $\kappa_\perp$, $\tau$ and $\mathbf{B}$.
\end{proposition}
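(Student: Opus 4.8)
The plan is to substitute the strong solution $T$ of \eqref{T_eqn_orig} into \eqref{SUPG_method} — with the discrete test spaces there replaced by the enlarged regular space $\mathbb{V}_T$ and its zero-trace subspace $\mathring{\mathbb{V}}_T$ — and to exhibit a companion field $\zeta$ so that both \eqref{SUPG_method_T} and \eqref{SUPG_method_zeta} hold. Throughout I write $\mathbf{s}=\sqrt{\kappa_\Delta}\,\mathbf{B}/|\mathbf{B}|$ and $g\coloneqq\mathbf{s}\cdot\nabla T=\sqrt{\kappa_\Delta}\,\mathbf{b}\cdot\nabla T$, so that, since $g\,\mathbf{s}=\kappa_\Delta\mathbf{b}(\mathbf{b}\cdot\nabla T)$, the strong form of \eqref{T_eqn_orig} reads $\partial_t T-\nabla\cdot(g\,\mathbf{s})-\nabla\cdot(\kappa_\perp\nabla T)=S$, and $T$ obeys the Dirichlet condition \eqref{Dirichlet_bc}. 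The conceptual crux, which I expect to be the main obstacle, is that $\zeta$ is \emph{not} the plain directional gradient $g$ (that is only the $\tau\to 0$ limit) but an SUPG-modified version of it, forced by the auxiliary equation.

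I would first analyse \eqref{SUPG_method_zeta}. Inserting $T_h=T$, expanding $M^f$ and $G^f_\parallel(\cdot,\cdot)=M^f(\mathbf{s}\cdot\nabla(\cdot),\cdot)$, and writing $\langle\mathbf{s}\cdot\nabla T,\,\mathbf{s}\cdot\nabla(\tau\eta)\rangle=\langle g\,\mathbf{s},\,\nabla(\tau\eta)\rangle$, a single integration by parts produces $\langle\tau\eta,\,\nabla\cdot(g\,\mathbf{s})\rangle$ together with a surface term $-\int_{\partial\Omega}\tau\eta\,g\,(\mathbf{s}\cdot\mathbf{n})\,dS$ that is cancelled exactly by the boundary integral built into \eqref{SUPG_method_zeta} (this is precisely why that integral, weighted by the normal component of $\mathbf{s}$, is present). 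What remains is $\langle\eta,\ \zeta+\mathbf{s}\cdot\nabla(\tau\zeta)-g+\tau\nabla\cdot(g\,\mathbf{s})\rangle$, so \eqref{SUPG_method_zeta} holds for every $\eta\in\mathbb{V}_T$ once $\zeta$ is chosen to satisfy, pointwise, the linear transport--reaction equation
\[
\zeta+\mathbf{s}\cdot\nabla(\tau\zeta)\;=\;g-\tau\,\nabla\cdot(g\,\mathbf{s}).
\]
Such a sufficiently regular $\zeta$ exists given the assumed regularity of $\tau$ (bounded away from $0$), $\mathbf{B}$, $\kappa_\parallel$, $\kappa_\perp$, $T$ — e.g.\ by characteristics, or via the substitution $\xi=\tau\zeta$, which turns it into a transport equation with positive reaction coefficient $1/\tau$ — and $\zeta\to g$ as $\tau\to 0$, recovering the unmodified mixed form.

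Next I would treat \eqref{SUPG_method_T} with $T_h=T$ and this $\zeta$. Using the defining relation for $\zeta$ together with the integration by parts $\langle\mathbf{s}\cdot\nabla\gamma,\,g\rangle=\langle\nabla\gamma,\,g\,\mathbf{s}\rangle=-\langle\gamma,\,\nabla\cdot(g\,\mathbf{s})\rangle$ (the boundary term vanishing since $\gamma|_{\partial\Omega}=0$), I would derive $G^f_\parallel(\gamma,\zeta)=\langle\mathbf{s}\cdot\nabla\gamma,\ g-\tau\nabla\cdot(g\,\mathbf{s})\rangle=-M^a\big(\gamma,\,\nabla\cdot(g\,\mathbf{s})\big)$. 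Similarly $\langle\nabla\gamma,\,\kappa_\perp\nabla T\rangle=-\langle\gamma,\,\nabla\cdot(\kappa_\perp\nabla T)\rangle$, so the two perpendicular terms of \eqref{SUPG_method_T} combine into $-M^a\big(\gamma,\,\nabla\cdot(\kappa_\perp\nabla T)\big)$. By linearity of $M^a$ in its second argument, the left-hand side minus right-hand side of \eqref{SUPG_method_T} then collapses to
\[
M^a\Big(\gamma,\ \partial_t T-\nabla\cdot(g\,\mathbf{s})-\nabla\cdot(\kappa_\perp\nabla T)-S\Big),
\]
whose second argument is the strong residual of \eqref{T_eqn_orig} and hence vanishes; thus \eqref{SUPG_method_T} holds for every $\gamma\in\mathring{\mathbb{V}}_T$, completing the proof.

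Beyond identifying the modified $\zeta$, the remaining work is careful integration-by-parts bookkeeping, and in the write-up I would isolate the two designed cancellations explicitly: the surface integral in \eqref{SUPG_method_zeta} removes the boundary term generated by the flux-form SUPG modification, and the term $-\langle\tau\mathbf{s}\cdot\nabla\gamma,\,\nabla\cdot(\kappa_\perp\nabla T_h)\rangle$ in \eqref{SUPG_method_T} is exactly what is needed so that, after $\zeta$ is eliminated via its defining relation, the full equation residual reappears under a single weighting $M^a(\gamma,\cdot)$.
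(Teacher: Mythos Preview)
Your argument is correct and follows essentially the same route as the paper: integrate by parts in the auxiliary equation so that the surface term cancels the built-in boundary integral, obtain $G^f_\parallel(\gamma,\zeta)=\langle\mathbf{s}\cdot\nabla\gamma,\,g-\tau\nabla\cdot(g\,\mathbf{s})\rangle$ via the substitution $\eta=\mathbf{s}\cdot\nabla\gamma$, and then collect terms in the temperature equation so that the strong residual appears. Your final packaging as $M^a(\gamma,\text{residual})$ is algebraically identical to the paper's split into a $\gamma$-weighted and a $\tau\mathbf{s}\cdot\nabla\gamma$-weighted pairing.

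The one genuine addition in your version is that you make the companion field $\zeta$ explicit as the solution of the pointwise transport--reaction equation $\zeta+\mathbf{s}\cdot\nabla(\tau\zeta)=g-\tau\nabla\cdot(g\,\mathbf{s})$ and sketch its solvability; the paper instead ``eliminates $\zeta_h$'' directly from the weak identity $M^f(\eta,\zeta_h)=\langle\eta,\,g-\tau\nabla\cdot(g\,\mathbf{s})\rangle$ without discussing whether such a $\zeta_h$ exists. Your treatment is thus slightly more complete on this point, at the cost of invoking a solvability argument the paper sidesteps.
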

\begin{proof}
We want to show that \eqref{SUPG_method} holds true with $T_h$ set to a strong solution $T$. For this purpose, we first eliminate $\zeta_h$ by using the definition of $G^f_\parallel$ and applying integration by parts in $G^f_\parallel(T_h, \eta)$, which leads to a reformulation of \eqref{SUPG_method_zeta} of the form
\begin{equation}
M^f(\eta, \zeta_h) = \langle \mathbf{s}\cdot \nabla T, \eta + \mathbf{s} \cdot \nabla (\tau \eta) \rangle - \int_{\partial\Omega} \tau \eta (\mathbf{s} \cdot \nabla T)(\mathbf{n} \cdot \mathbf{s})dS = \left\langle \eta, \mathbf{s}\cdot\nabla T - \tau \nabla \cdot \big(\mathbf{s}(\mathbf{s}\cdot \nabla T)\big)\right\rangle, \label{zeta_eq_reformulate}
\end{equation}
where we assumed sufficient regularity for the test functions $\eta$, as well as $T$, $\mathbf{B}$ and $\tau$. Next we set 
$\eta = \mathbf{s}\cdot \nabla \gamma$ in \eqref{zeta_eq_reformulate} for test function $\gamma$ appearing in the temperature equation (again assuming sufficient regularity), which yields
\begin{equation}
G_\parallel^f(\gamma, \zeta_h) = M^f(\mathbf{s} \cdot \nabla \gamma, \zeta) = \left\langle \mathbf{s} \cdot \nabla \gamma, \mathbf{s}\cdot\nabla T - \tau \nabla \cdot \big(\mathbf{s}(\mathbf{s}\cdot \nabla T)\big)\right\rangle.
\end{equation}
We can then substitute this in the second term in \eqref{SUPG_method_T}, from which we obtain
\begingroup
\addtolength{\jot}{2mm}
\begin{align}
&M^a\!\left(\!\gamma, \!\pp{T}{t}\right) + \left\langle \mathbf{s} \cdot \nabla \gamma, \mathbf{s}\cdot\nabla T - \tau \nabla \cdot \big(\mathbf{s}(\mathbf{s}\cdot \nabla T)\big)\right\rangle \nonumber \\
&\hspace{2cm} + \langle \nabla \gamma, \kappa_\perp \nabla T \rangle - \langle \tau \mathbf{s} \cdot \nabla \gamma, \nabla \cdot (\kappa_\perp \nabla T) \rangle = M^a(\gamma, S) & \forall \gamma \in \mathring{\mathbb{V}}_T \label{SUPG_method_T_I}.
\end{align}
\endgroup
Finally, collecting terms in \eqref{SUPG_method_T_I}, we obtain
\begingroup
\addtolength{\jot}{2mm}
\begin{align}
\left\langle \gamma, \pp{T}{t} \right\rangle + &\langle \mathbf{s} \cdot \nabla \gamma, \mathbf{s} \cdot \nabla T \rangle + \langle \nabla \gamma, \kappa_\perp \nabla T \rangle - \langle \gamma, S \rangle = \nonumber \\
&- \left\langle \tau \mathbf{s} \cdot \nabla \gamma, \pp{T}{t} - \nabla \cdot \big(\mathbf{s}(\mathbf{s} \cdot \nabla T)\big) - \nabla \cdot (\kappa_\perp \nabla T) - S \right\rangle,
\end{align}
\endgroup
for any test function $\gamma \in \mathring{\mathbb{V}}_T$. This equation is satisfied by the strong solution $T$ as required, since the left-hand side is the standard weak formulation of the anisotropic heat flux equation \eqref{T_eqn_orig}, while the right-hand side consists of the inner product of \eqref{T_eqn_orig} in strong form together with $\tau \mathbf{s} \cdot \nabla \gamma$.
\end{proof}
We complete this section with a brief discussion and remarks on the spatial discretization \eqref{SUPG_method}.

\textbf{Ellipticity.} First, given an implicit discretization in time, the SUPG-modified method \eqref{SUPG_method} can be written as
\begin{equation}
\begin{pmatrix}
\frac{1}{\delta t} M^a + L^f_\perp & {G^f_\parallel}^T \\
-G^f_\parallel - G^f_{\parallel, b} & M^f
\end{pmatrix}
\begin{pmatrix}
\mathbf{T}_c\\
\boldsymbol{\zeta}_c
\end{pmatrix}
=
\begin{pmatrix}
R_T \\
R_\zeta
\end{pmatrix}, \label{matrix_SUPG}
\end{equation}
where in a slight abuse of notation, here $M^a$, $M^f$, and $G^f_\parallel$ denote the matrix version of the corresponding forms described in Definition \eqref{def_cg_SUPG_ops}, and $L^f_\perp$ corresponds to the last two terms on the left-hand side of \eqref{SUPG_method_T}. Further, $G^f_{\parallel, b}$ denotes a boundary-type directional gradient matrix corresponding to the (negative version of the) last term on the left-hand side of \eqref{SUPG_method_zeta}. Eliminating the auxiliary variable as before, we obtain an algebraic form of the heat equation given by
\begin{equation}
\left(\frac{1}{\delta t}M^a + {G^f_\parallel}^T \left(M^f\right)^{-1} \left(G^f_\parallel + G^f_{\parallel,b}\right) + L^f_\perp\right) \mathbf{T}_c = \dots, \label{Schur_SUPG}
\end{equation}
which retains the the same ``transpose gradient''-``mass inverse''-``gradient'' structure for the anisotropic heat flux term as in \eqref{Schur_SUPG}, up to the SUPG-modification in $M^f$ and the additional SUPG-related term $G^f_{\parallel, b}$. This loss in ellipticity of the anisotropic diffusion operator -- and also the SUPG-modified isotropic operator $L^f_\perp$ -- is a result of ensuring the discretization to be consistent by reverting to a Petrov-Galerkin method. Note that as $\tau \rightarrow 0$, $M^f$ reduces to the usual mass matrix, $G^f_{\parallel,b}$ vanishes, and $L^f_\perp$ reduces to $L_\perp$, and we therefore recover the parabolic structure in this limit. As usual for SUPG methods, in practice the Petrov-Galerkin type modification translates to potential instabilities if the SUPG stabilization parameter $\tau$ is not chosen carefully. The loss of symmetry and ellipticity seems like a downside from a linear solver perspective, but for very high anisotropy, state-of-the-art elliptic solvers like geometric and algebraic multigrid methods are not robust \cite{wimmer2024fast}.

\textbf{Stabilization parameter.} For the choice of stabilization parameter, we recall that the SUPG-modification to test functions $\gamma$ is given by $\tau \mathbf{s} \cdot \nabla \gamma$, for $\mathbf{s} = \sqrt{\kappa_\Delta(T)} \mathbf{B}/|\mathbf{B}|$. The heat conductivity's unit is m$^2$/s, and therefore for the overall SUPG modification to be non-dimensional, we require the unit of $\tau$ to be s$^{1/2}$. This is different to the standard SUPG method for advection equations such as \eqref{SUPG_adv}, where the streamline velocity $\mathbf{u}$ is in m/s and $\tau$ represents a time scale in seconds. The difference follows since here we do not consider a hyperbolic equation. If for instance instead we considered a discretization for a mixed form of an anisotropic wave equation -- which could be achieved e.g. by replacing $\zeta_h$ by $\pp{\zeta_h}{t}$ in \eqref{SUPG_method_zeta} and taking $\sqrt{\kappa_\Delta}$ as a wave speed with unit m/s --  $\tau$ would scale in seconds again. Guided by common choices of $\tau$ in the computational fluid dynamics literature \cite{tezduyar2002stabilization}, in the numerical results section below we set
\begin{equation}
\tau = \left(\frac{2}{\sqrt{\delta t}} + \frac{k \sqrt{\kappa_\Delta}}{\delta x}\right)^{-1}, \label{def_tau}
\end{equation}
for local mesh cell length scale $\delta x$ and polynomial degree $k$ of the CG function space $T_h$ and $\zeta_h$ are discretized in. Note that as we refine the mesh and time step, the stabilization contribution gets smaller, reaching $\tau = 0$ in the space-time refined limit. In this case, the mixed SUPG-modified method \eqref{def_cg_SUPG} reduces to the standard mixed method \eqref{CG_mixed_Gunter} up to the choice of finite element space for the auxiliary variable $\zeta_h$. In \eqref{CG_mixed_Gunter} the auxiliary variable is placed in a DG space in view of $\zeta_h$ as a gradient of the CG field $T_h$; here we placed $\zeta_h$ in a CG space since we apply SUPG -- which requires CG spaces -- to both the $T_h$ and $\zeta_h$ equations. Finally, as mentioned below Definition \ref{def_cg_SUPG_ops}, we note that $\tau$ occurs within gradient operations in our SUPG-modified mass matrix $M^f$, and therefore needs to be continuous for $M^f$ to be well-defined. In practice, $\tau$ is evaluated as an expression according to \eqref{def_tau} on quadrature points, including the local cell length scale $\delta x$, which we found to work well.

Additional adjustments to $\tau$ could for instance include information on the mesh relative to the magnetic field. This is of relevance for tokamak applications with frequently very anisotropic cells (see Figure \ref{tokamak_setup} in the numerical results section for a mildly anisotropic case); if a magnetic field line passes through such a cell along the elongated direction, we would expect the parallel diffusion to be significantly better resolved than if the field line passes through the short direction. More generally, an alignment of conductivity and mesh anisotropies may lead to a discrete system of equations that is overall more isotropic in character, and this can be addressed on the level of our stabilization paremeter e.g., by replacing $\delta x$ in \eqref{def_tau} by an average distance covered by $\mathbf{B}$ passing through the given cell.

\begin{remark}[\textit{Alternative SUPG-based forms}]
In the original mixed formulation \eqref{CG_mixed_Gunter}, the auxiliary variable $\zeta$ directly corresponds to $\mathbf{s} \cdot \nabla T$. For the SUPG-modified method \eqref{SUPG_method}, this no longer holds true, and more generally, there is some flexibility with the exact setup of our SUPG modification and definition of auxiliary variable $\zeta$. Simpler formulations include e.g. a more straight-forward application of SUPG-based on the original mixed form \eqref{CG_mixed_Gunter}, leading to
\begingroup
\addtolength{\jot}{4mm}
\begin{subequations} \label{CG_mixed_SUPG_simple}
\begin{align}
&\left\langle \gamma, \pp{T_h}{t} \right\rangle + \langle \mathbf{s} \cdot \nabla \gamma, \zeta_h \rangle + \langle \nabla \gamma, \kappa_\perp, \nabla T_h \rangle = \langle \gamma, S\rangle - \left\langle \tau \mathbf{s} \cdot \nabla \gamma, T_{rhs} \right\rangle & \forall \gamma \in \mathring{\mathbb{V}}^{\text{CG}}_k, \\
&\left\langle \eta + \tau \mathbf{s} \cdot \nabla \eta, \zeta_h - \mathbf{s} \cdot \nabla T_h \right\rangle = 0 & \forall \eta \in \mathbb{V}^{\text{CG}}_k,
\end{align}
\end{subequations}
\endgroup
were $T_{rhs}$ corresponds to the strong form \eqref{T_eqn_orig} of the anisotropic heat flux equation applied to the discrete field $T_h$. While this form is clearly consistent -- $T_{rhs}$ vanishes for strong solutions $T$ and $\zeta$ still corresponds to $\mathbf{s} \cdot \nabla T$ in strong form -- we found it to lead to worse results than the version defined in Definition \ref{def_cg_SUPG}, possibly because it can be shown to not lead to an approximately antisymmetric off-diagonal block structure as seen in \eqref{matrix_Gunter}, \eqref{matrix_DG} and \eqref{matrix_SUPG}. Finally, we also did not find any SUPG-based strategies for the primal form of the anisotropic heat equation -- that is without the use of an auxiliary variable -- that performed well in practical test cases.
\end{remark}

\begin{remark}[\textit{Solver considerations}] 
While we focus on spatial discretizations in this work, we end this section with a short remark on solver considerations. One possible way to solve for mixed systems such as \eqref{matrix_Gunter} and \eqref{matrix_SUPG} is to eliminate the auxiliary variable in order to obtain a Schur complement in the temperature field $T$, given by \eqref{Schur_Gunter} and \eqref{Schur_SUPG}, respectively. However, standard multigrid methods do not perform well for extremely anisotropic problems due to the stark difference in error frequency along versus across the field lines defining the direction of anisotropy. In our case, we found this approach to be particularly challenging for the SUPG-modified formulation leading to the system \eqref{matrix_SUPG}, likely due to additional asymmetry introduced by the SUPG-based modifications in the Schur complement \eqref{Schur_SUPG}.

Alternatively, one can follow the approach of \cite{wimmer2024fast}, where the system is considered as two weakly coupled directional derivatives, and multigrid methods specialized on transport such as AIR \cite{manteuffel2019nonsymmetric, manteuffel2018nonsymmetric} are applied instead. Such methods exploit the sparsity in stencil and matrix structure that arises due to upwinding, including of SUPG-type \cite{manteuffel2019nonsymmetric}. In future work, we aim to combine our SUPG-modified spatial discretization \eqref{def_cg_SUPG} with such an approach.
\end{remark}

%
\section{Numerical results} \label{sec_Numerical_results}
Having introduced our novel spatial discretization in Definition \eqref{def_cg_SUPG}, we next consider a series of numerical tests to validate the discretization's order of accuracy as well as to compare it against the existing CG methods \eqref{CG_straight} and \eqref{CG_mixed_Gunter} in scenarios relevant to magnetic confinement fusion simulations. As mentioned in the background section, we couple these spatial discretizations to an implicit mid-point rule in time. The tests are implemented in the finite element library Firedrake \cite{FiredrakeUserManual}, which heavily relies on PETSc \cite{balay2019petsc}. The system of equations are solved using the direct solver library MUMPS \cite{amestoy2000multifrontal}, and in the case of temperature-dependent conductivities, we use the nonlinear Newton solver functionality from PETSc, together with an exact Jacobian computed automatically in Firedrake. For each test case, we use the SUPG parameter $\tau$ as specified in \eqref{def_tau}. We will consider 2D quadrilateral and 3D prism meshes, using the standard continuous Galerkin space $P_k$ for our temperature space, and the discontinous Galerkin space $dP_{k-1}$ for $\zeta_h$ in the mixed discretization \eqref{CG_mixed_Gunter}. Our magnetic field $\mathbf{B}$ is set to be a function of a divergence conforming finite element space $\mathbb{V}_B$ of equal polynomial degree $k$ as our temperature CG space $P_k$. For the above types of meshes, $\mathbb{V}_B$ corresponds to the quadrilateral Raviart-Thomas space $RT_{c_k}^f$ and a suitable tensor product including the triangular Raviart-Thomas space $RT_k^f$, respectively \cite{mcrae2016automated}. There is no particular reason for this choice of $\mathbb{V}_B$ other than related work on magnetohydrodynamics by the authors, and in general we found similar results for other choices of magnetic field space.
\subsection{Convergence} First, we validate the SUPG-based discretization's order of accuracy using a simple 2D test case modeling dissipation of a Gaussian profile along magnetic field lines. The domain $\Omega$ is given by a unit square $[0, 1]^2$, and we set the magnetic field and initial temperature to
\begin{equation}
\mathbf{B} = (B_x, B_y)^T = [1, 0]^T, \hspace{1cm} T_0(x, y) =f(y) \; e^{-(x - 0.5)/\sigma^2}, \;\;\;\; f(y) =  \tfrac{1}{2}(1-\cos(2\pi y)), \label{Gaussian_analytic}
\end{equation}
for $\sigma = 0.2$. The conductivities are set to $\kappa_\parallel = 1$ and $\kappa_\perp =0.01$, and the boundary conditions are set equal to the values of the initial temperature field along the boundary. In the absence of perpendicular diffusion, the problem consists of decoupled 1D diffusion equations applied to Gaussian profiles on an interval $[0, 1]$, subject to Dirichlet boundary conditions. The solution to each 1D profile can be computed analytically using separation of variables and a Fourier decomposition given by $T_F = a_0 + \sum_{n=1}^m b_n \sin(\pi n x)e^{-\kappa_\parallel \pi^2t}$. We set $m = 200$ and use $f(y) T_F(\mathbf{x}, 0)$ for the initial projection into the initial discrete field $T_{h_0}$, and further set our forcing term equal to $S(t) = -\kappa_\perp \Delta \big(f(y)T_F\big)$. This way, the effects of perpendicular diffusion are balanced out, and $T$ evolves according to the aforementioned analytic solution. We can then compare our discrete solution $T_h$ against this analytic solution by considering the relative $L^2$ error
\begin{equation}
e(t) = \frac{\|T_h(t) - T_F(0)\|_2}{\|T_F(0)\|_2}, \label{relative_L2_error}
\end{equation}
where the finite element field $T_h$ and the Fourier mode expression $T_F$ (with pre-computed coefficients) are evaluated at quadrature points.

The polynomial degree for the temperature CG space is set to $k = 2$, and we consider a regular quadrilateral mesh for $\Omega$, with $10 \times 2^{k_x}$ cells in each coordinate direction, for $k_x \in \{1, 2, 3\}$. Further, the time step is set to $\delta t = 10^{-5}$ for each spatial resolution, and we run up to $t_{max} = 400 \delta t$. The resulting error convergence rates for the two standard primal and mixed as well as SUPG-based discretizations \eqref{CG_straight}, \eqref{CG_mixed_Gunter} and \eqref{SUPG_method}, respectively, are given in Figure \ref{analytic_errors}.
\begin{figure}[ht]
\begin{center}
\includegraphics[width=0.99\textwidth]{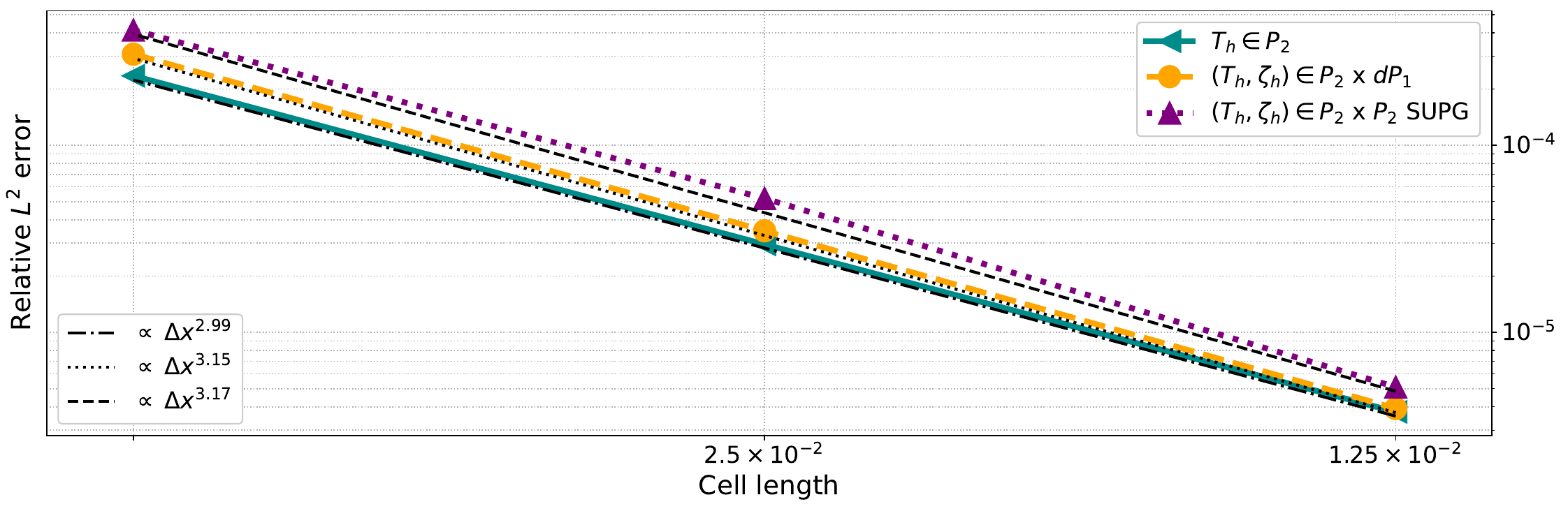}
\vspace{-2mm}
\caption{Convergence plot for relative $L^2$ errors \eqref{relative_L2_error} for analytic Gaussian profile diffusion test case \eqref{Gaussian_analytic}, for primal CG (solid cyan), standard mixed CG (dashed orange), and SUPG-based (dotted purple) formulations \eqref{CG_straight}, \eqref{CG_mixed_Gunter}, and \eqref{SUPG_method}, respectively.} \label{analytic_errors}
\end{center}
\end{figure}

For second order CG spaces, the expected convergence rate for the isotropic heat equation is given by 3, and all three CG discretizations approximately lead to such a rate for this test case. Further, as the degree of anisotropy is relatively small and the time step is small with respect to the parallel diffusion, the difference in error between the discretizations is small. When comparing the two mixed discretizations, we observe a slightly elevated error for the SUPG setup; in further tests, we found this to be due to using the CG space $P_2$ instead of the DG space $dP_1$ for the auxiliary variable. In other words, the latter DG space may be more suitable for representing the directional derivative $\mathbf{s} \cdot \nabla T$ than $P_2$; however, as we will see below, using $P_2$ leads to advantages in more practical test cases with higher anisotropy ratios, as it allows for the SUPG-stabilized approach detailed in the previous section.
\subsection{Magnetic flux surface} Having discussed our spatial discretization's order of accuracy, we next consider three tokamak simulation related test cases. The first represents a temperature perturbation on an idealized 2D magnetic flux surface (see Figure \ref{tokamak_setup}). For this purpose, we consider a 2D periodic domain $\Omega = [0, L_x] \times [0, L_y]$ for $(L_x, L_y) = (5, 4)$, with a magnetic field given by
\begin{equation}
\mathbf{B} = (1, m)^T,
\end{equation}
where $m = 20$ denotes the magnetic field's slope with respect to the surface. We then construct the temperature field perturbation in orthogonal coordinates $(\xi, \zeta)$, where the $\xi$ coordinate is aligned with the magnetic field lines, and the origin is placed at $(x, y) = (L_y/m, L_y/2)$. Further, the unit lengths for $\xi$ and $\zeta$ are defined such that the the $xy$-coordinate origin is at $(-2, -1)$ in $\xi\zeta$-coordinates. We then set the initial temperature equal to $T_0(\xi, \zeta) = T(\xi, \zeta, t)_{|_t=0}$, where
\begin{equation}
T(\xi, \zeta, t) = T_b + \left(1+ 4\kappa_\parallel \xi_0^2 t\right)^{-1/2}\; e^{-\frac{(\xi_0 \xi)^2}{\sqrt{1+4\kappa_\parallel \xi_0^2 t}}} \; (1 + \cos(\pi \zeta))/2, \label{flux_solution}
\end{equation}
if $\zeta \in (-1, 1)$, and $T = T_b$ otherwise. Further, $T_b = 0.2$, $\xi_0 = 2.5$ and we consider conductivities $\kappa_\parallel = 10$, $\kappa_\perp = 0$. Note that since there is no conductivity in the perpendicular direction, the problem decouples into a series of 1D Gaussians along separate magnetic field lines, and we obtain an analytic solution in time given by \eqref{flux_solution} together with its corresponding periodically shifted solutions with respect to our underlying domain $\Omega$ (up to the background temperature value $T_b$). For the discretization, we consider a pseudo-regular quadrilateral mesh with $120 \times 96$ cells, with inner vertices perturbed randomly by a factor of up to $0.1\delta x$ in order to avoid a decreased error due to an equal alignment of the magnetic field with the mesh throughout the domain. Additionally, we start with a small time step of $\delta t = 10^{-4}$, which is increased linearly to $\delta t = 0.1$ over 20 time steps. The simulation is run for the two standard primal and mixed spatial discretizations \eqref{CG_straight} and \eqref{CG_mixed_Gunter}, as well as the SUPG-based form \eqref{SUPG_method}. Results for the qualitative field development and qualitative as well as quantitative error development are shown in Figures \ref{flux_surface_T}, \ref{flux_surface_T_error} and \ref{flux_surface_error}, where we consider the relative $L^2$ error as defined in \eqref{relative_L2_error} (with $T_F$ replaced by the expression for $T$ from \eqref{flux_solution}, together with periodically shifted profiles).
\begin{figure}[ht]
\begin{center}
\includegraphics[width=0.99\textwidth]{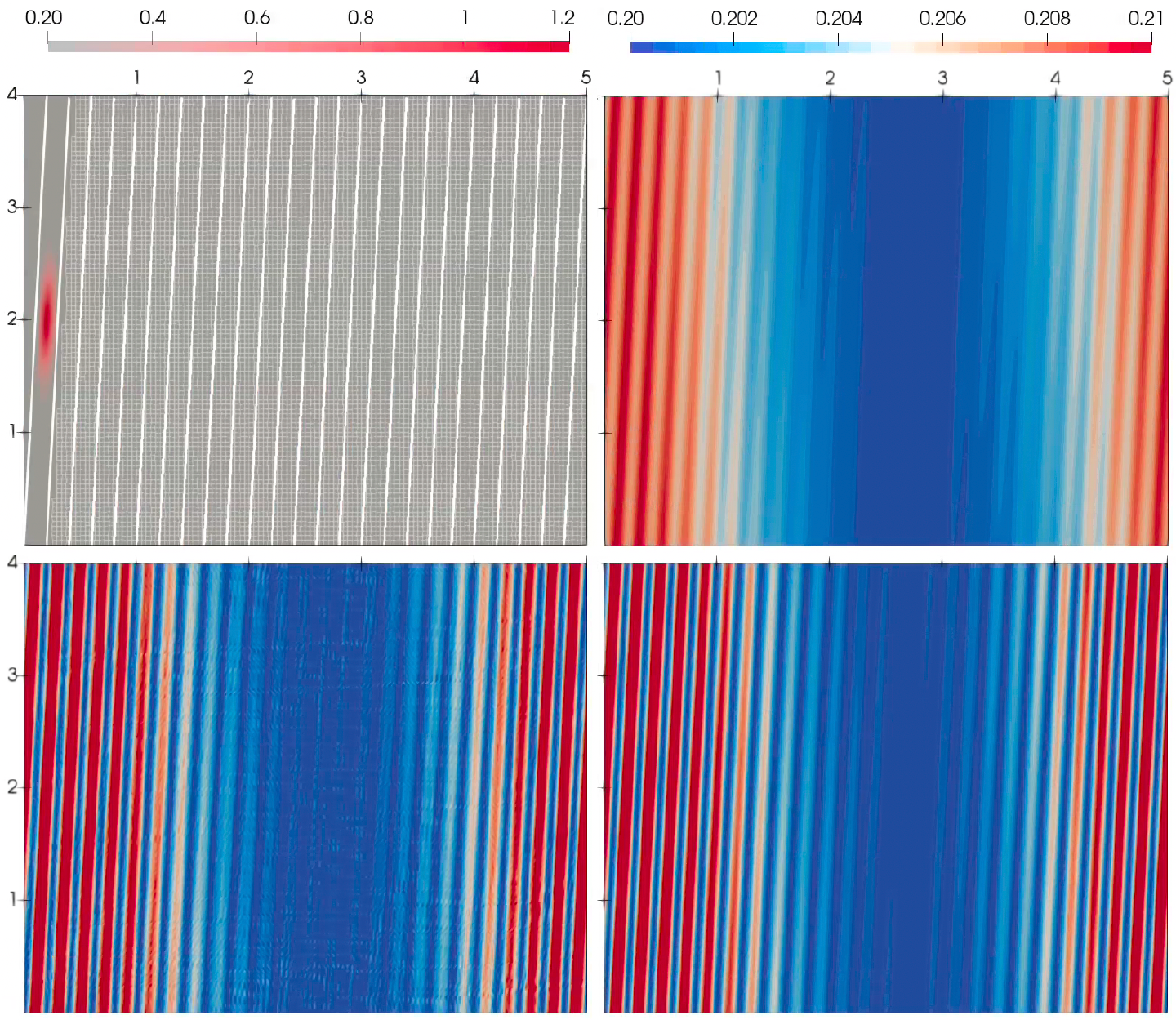}
\vspace{-2mm}
\caption{Images depicting temperature field $T$ for test case \eqref{flux_solution}. Top left: $T$ at initial time, together with straight white lines tracing a single (periodic) magnetic field line and a section of the quadrilateral mesh used for the simulation runs. Top right: primal CG formulation \eqref{CG_straight} at $t \approx 14$. Bottom row, left to right: standard mixed and SUPG-modified mixed formulations \eqref{CG_mixed_Gunter} and \eqref{SUPG_method}, respectively, at $t \approx 14$.} \label{flux_surface_T}
\end{center}
\end{figure}
\begin{figure}[ht]
\begin{center}
\includegraphics[width=0.99\textwidth]{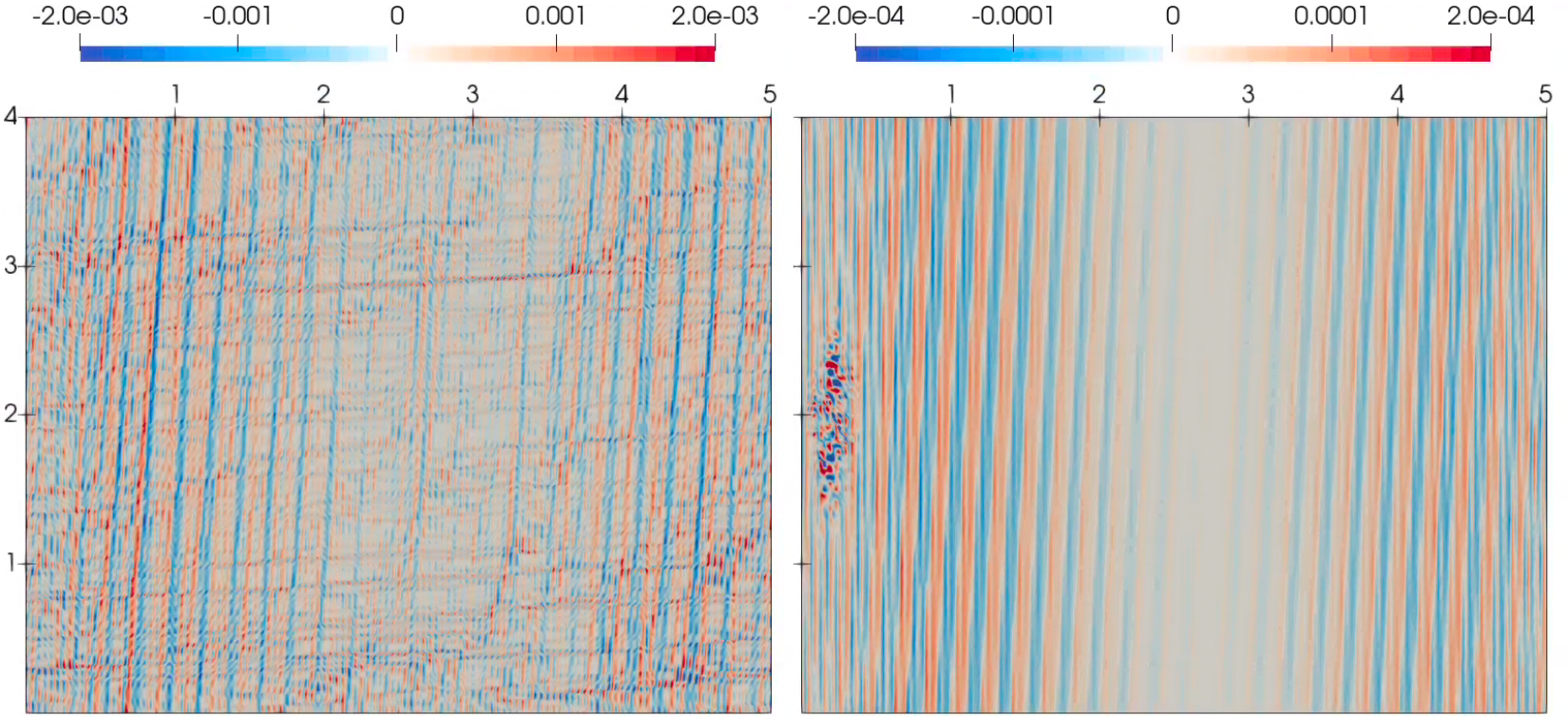}
\vspace{-2mm}
\caption{Images depicting error of temperature field $T$ for test case \eqref{flux_solution}. Left to right: standard mixed and SUPG-modified mixed formulations \eqref{CG_mixed_Gunter} and \eqref{SUPG_method} with color ranges of $\pm 2 \times 10^{-3}$ and $\pm 2 \times 10^{-4}$, respectively, at $t \approx 14$.} \label{flux_surface_T_error}
\end{center}
\end{figure}
\begin{figure}[ht]
\begin{center}
\includegraphics[width=0.99\textwidth]{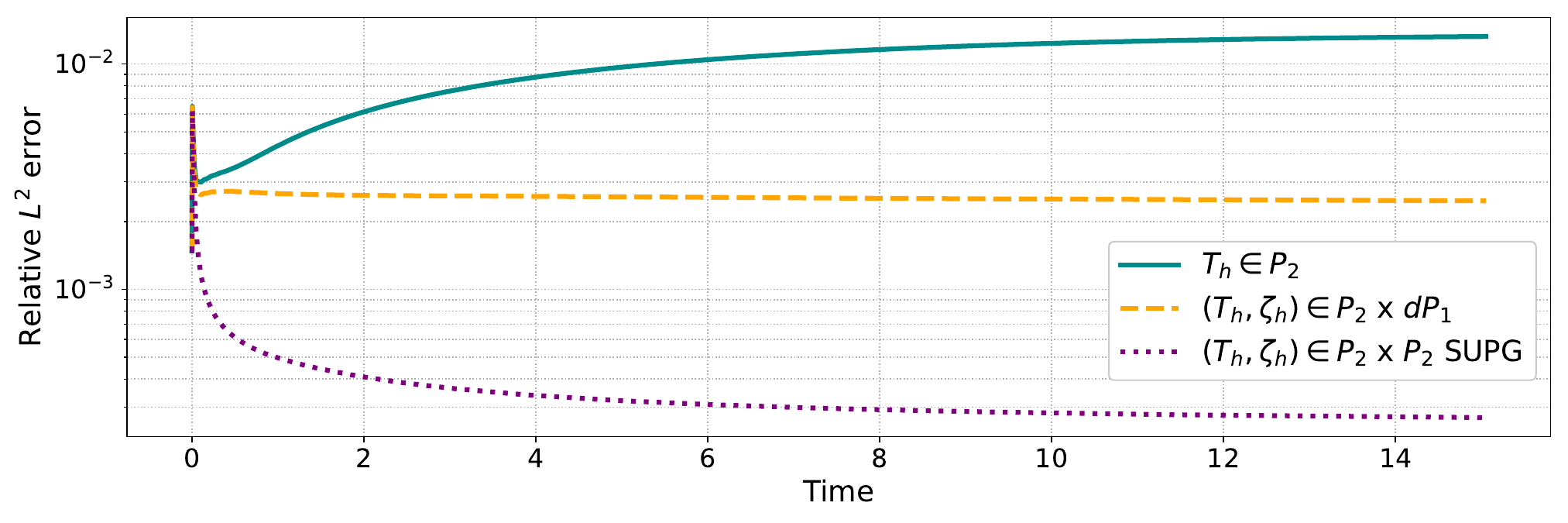}
\vspace{-2mm}
\caption{Relative $L^2$ error development over time for magnetic flux surface test case, for primal CG (solid cyan), standard mixed CG (dashed orange), and SUPG-based (dotted purple) formulations \eqref{CG_straight}, \eqref{CG_mixed_Gunter}, and \eqref{SUPG_method}, respectively.} \label{flux_surface_error}
\end{center}
\end{figure}

We find that the primal CG formulation exhibits a significant amount of spurious cross diffusion. This can be seen by considering the magnetic field line along $\zeta = 1$ (white straight lines in Figure \ref{flux_surface_T}), which initially does not contain any of the temperature perturbation. Since there is no perpendicular diffusion, we therefore expect $T = T_b$ along this field line throughout the simulation, which is clearly not the case for the primal CG formulation. Conversely, this is satisfied to a much better degree for both of the mixed formulations. However, the field development of the mixed formulations differs in that the standard mixed CG one leads to spurious grid scale noise (see Figure \ref{flux_surface_T_error}). In contrast, the SUPG-based formulation leads to the smallest error, which itself is dominated by noise near the perturbation's initial location. This noise is likely due to an initial condition projection error combined with a time stepping-related error in the perpendicular direction, which cannot be attenuated by the parallel diffusion. Overall, at the end of the simulation's runtime, we find relative $L^2$ errors of approximately $1.3\times10^{-2}$, $2.5 \times 10^{-3}$ and $2.7\times 10^{-4}$ for the primal CG, standard mixed CG, and SUPG-based mixed CG formulations, respectively, demonstrating a strong reduction in error through the auxiliary variable as well as the SUPG-stabilization.
\subsection{Tokamak equilibrium}
Next, we consider a full-torus tokamak scenario starting from an axisymmetric MHD equilibrium
\begin{equation}
\nabla p = \tfrac{1}{\mu_0}(\nabla \times \mathbf{B} ) \times \mathbf{B},
\end{equation}
for pressure $p$ and vacuum permeability $\mu_0$. The equilibrium is
generated by a Grad-Shafranov solver code \cite{liu2021parallel},
which returns $p$ inside the separatrix -- that is the boundary
confining the closed magnetic field lines; see Figure
\ref{tokamak_setup} -- and the components of $\mathbf{B}$ (in
cylindrical coordiantes $(R, \varphi, Z)$) on a regular rectangular
grid corresponding to a poloidal (i.e., $(R, Z)$) plane. The magnetic
field's safety factor -- that is the ratio of toroidal to poloidal
rotations of magnetic field lines -- varies between approximately $q_0=1.5$
at the magnetic axis and $q_{95}=6$ towards the separatrix. The equilibrium
resembles an ITER discharge, with a magnetic field
strength of $B_0 = 5.42$ T as well as a pressure of $p_0 = 656$ kPa,
both measured at the magnetic axis (see Figure \ref{tokamak_setup}).
We then interpolate the rectangular grid's field data into finite
element spaces defined on a 3D mesh. For faster computations, rather
than considering the entire torus, the mesh consists of a toroidally
(i.e. $\varphi$-directed) periodic section that is constructed by
periodically extruding an irregular triangular poloidal mesh in the
toroidal direction. The extrusion length is $\pi/4$ radians, and the
resulting 3D mesh consists of prisms aligned in the toroidal
direction. Note that the extrusion direction is curved, and we
therefore represent the mesh geometry by polynomials of degree equal
to that chosen for our temperature space. Finally, we note that in
practice, tokamak meshes are constructed to be aligned with the
flux surfaces. However, here we refrain from doing so since we are
interested in testing our new spatial discretization for scenarios in
which the magnetic flux surfaces are not aligned with the mesh. In
particular, while flux surface-aligned meshes can be constructed for
initial magnetohydrodynamic equilibria, this need not be the case at
later stages for more complex simulations involving e.g.,
magnetohydrodynamic instabilities that change the magnetic topology.

\begin{figure}[!htb]
\begin{center}
\includegraphics[width=0.99\textwidth]{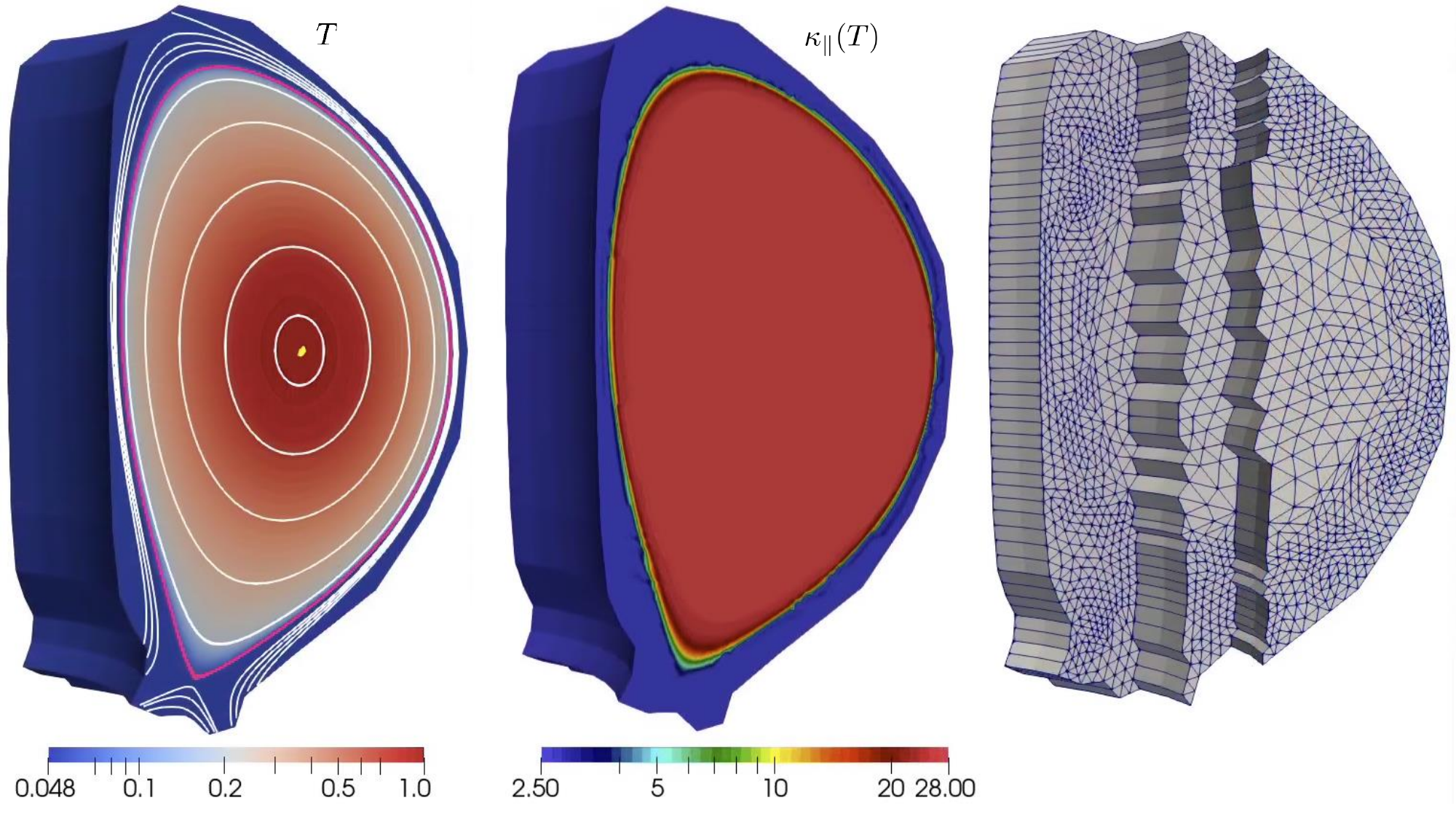}
\vspace{-2mm}
\caption{Images depicting toroidally periodic domain for tokamak test
  case setup. Left: non-dimensionalized initial temperature field $T$,
  and white lines denoting (vertical cross-sections of axisymmetric)
  magnetic flux surfaces. The non-dimensionalized magnetic field's
  $(R,Z)$-component is of magnitude at most approximately $0.3$, while
  its $\phi$-component's strength varies between approximately $0.75$
  and $1.5$. The purple curve and yellow dot denote the separatrix and
  magnetic axis, respectively. Center: initial parallel conductivity
  $\kappa_\parallel$ defined according to
  \eqref{k_par_tokamak}. Right: cross-section of once (poloidally)
  refined second order mesh created from toroidally periodic extrusion
  created from 2D irregular triangular poloidal
  mesh.} \label{tokamak_setup}
\end{center}
\end{figure}

The pressure is defined inside the separatrix only, noting that
$(\nabla \times \mathbf{B}) \times \mathbf{B} = \mathbf{0}$ outside
the separatrix. We add a floor value to $p$ throughout the domain,
which corresponds to a value found between the separatrix and the
tokamak chamber's wall. This value is typically very small, and for
the case of meshes not aligned with the separatrix, it may be smaller
than the values resulting from transferring the 2D regular poloidal
plane data onto the finite element grid for second or higher order
polynomial spaces, since the pressure field exhibits a very steep
gradient just inside the separatrix. This in turn may lead to negative
pressure and therefore temperature values in the simulation, causing
errors e.g. when evaluating the temperature-dependent parallel
conductivity \eqref{kappa_par_scale}. Since we do not use a
separatrix-aligned mesh in this test case, we resolve this by adding a
larger floor value, given by a factor of $p_b = 0.05 p_0$ of the
pressure at the magnetic axis.

Given $p$ and $\mathbf{B}$, we non-dimensionalize the problem and
domain with respect to their values $p_b + p_0$ and $B_0$ at the
magnetic axis, as well as the tokamak chamber's length scale. The
temperature $T$ and the corresponding boundary condition $T_{bc}$ is
computed from $p$ and $p_b$ through a representative density number
$n_0$ and the atomic number $Z$ of the plasma in
consideration. Further, we consider a scenario in which the
perpendicular diffusion is set to $\kappa_\perp = 0$, while the
parallel diffusion is set to
\begin{equation}
\kappa_\parallel =  8.8 \times 10^3 \; f(T)^{5/2}, \;\;\; f(T) = T_l - \sigma_l \ln\left(1 + e^{-\frac{T - T_l}{\sigma_l}}\right), \label{k_par_tokamak}
\end{equation}
for $T_l = 0.1$ and $\sigma_l = 0.04$. This choice follows the Braginskii transport model \cite{braginskii1965transport}, up to an upper limit reflecting restrictions to this model; for details on the non-dimensionalization and choice of $\kappa_\parallel$, see Appendix \ref{App_kappas}. In particular, for $T < T_l$, we have $f(T) \approx T$, while for $T > T_l$, we have $f(T) \approx T_l$. Since the initial axisymmetric pressure profile's contours are aligned with the magnetic flux surfaces, the absence of perpendicular diffusion implies an analytic solution of the anisotropic heat flux equation given by $T(\mathbf{x}, t) = T(\mathbf{x}, 0)$. For this reason, any change of the discretely computed temperature $T_h$ over time can be attributed to spurious cross-diffusion of temperature in the perpendicular direction\footnote{An additional error may be attributed to interpolating from the GS equilibrium solver grid onto the finite element grid; this can be avoided e.g., by using the same poloidal mesh in a finite-element based GS solver such as \cite{serino2024adaptive}.}.

We consider the spatial discretizations \eqref{CG_straight},
\eqref{CG_mixed_Gunter} and \eqref{SUPG_method} with CG spaces of
degree $k=2$, as well as $k=1$ in the case of \eqref{SUPG_method}. For
$k=2$, we use 3 cells in the toroidal extrusion, and for $k=1$, we use
a once refined version of the underlying 2D triangular poloidal mesh,
with 6 cells in the toroidal extrusion, leading to an overall equal
number of degrees of freedom as for the $k=2$ case. These two setups
then serve as the base resolution in space, which is refined in the
poloidal direction once -- noting that spurious cross-diffusion is
primarily a problem resulting from an inaccurate resolution of the
parallel heat flux' poloidal component -- for a total of 2 runs for
each choice of spatial discretization (see Figure \ref{tokamak_setup}
for the once refined mesh for $k=2$). To avoid oscillatory errors
related to initial adjustments of the discrete temperature profile to
the large parallel diffusion term, we start with a small time step of
$\delta t = 10^{-5}$, which is increased linearly to $\delta t = 5$
over 100 time steps. The simulation is run up to $t_{max} = 10^5$,
noting that after non-dimensionalization, the time unit is given by
the time taken for an Alfven wave to travel a characteristic distance
within the tokamak chamber. In particular, the choice of final time
step $\delta t$ and simulation runtime $t_{max}$ corresponds to values
that may be considered in (reduced) magnetohydrodynamic simulations
used to study slow, dissipative time scales of interest for resistive
magnetohydrodynamic instabilities
\cite{Biskamp_1993,park_plasma_1999,chacon2002implicit}. An important quantity with
respect to such simulations is the total amount of internal energy
stored within the plasma chamber; in our case for the
non-dimensionalized anisotropic heat equation, this can be translated
to $\|T(t)\|_1$. For the case of $\kappa_\perp = 0$, we expect this
quantity to remain constant. In practice after discretization,
spurious cross-diffusion will cause temperature to be diffused across
the separatrix and then ``removed'' from the domain via the fixed
small Dirichlet boundary conditions, noting that all magnetic field
lines outside the separatrix start and end at the domain's
boundary. In the worst case, all temperature beyond a constant
background profile $T \equiv T_{bc}$ is diffused. Time series of the
relative total available temperature
\begin{equation}
\frac{\|T(t)\|_1 - \|T_{bc}\|_1}{\|T(0)\|_1 - \|T_{bc}\|_1}, \label{rel_tot_t}
\end{equation}
for the different discretizations and spatial resolutions are depicted in Figure \ref{tokamak_losses}.

\begin{figure}[ht]
\begin{center}
\includegraphics[width=1.0\textwidth]{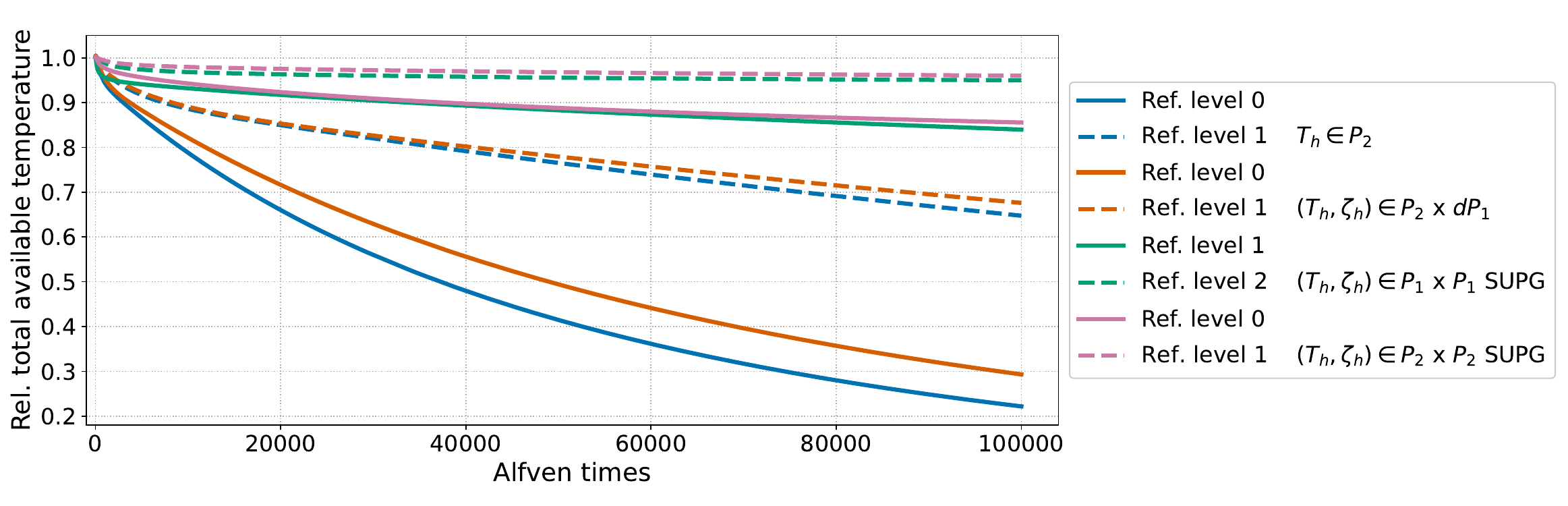}
\begin{tabular}{|c|c|c|c|c|} 
 \hline
$t = t_{max}$ & $T_h \in P_2$ & $(T_h, \zeta_h) \in P_2 \times dP_1$ & $(T_h, \zeta_h) \in P_1 \times P_1$ SUPG & $(T_h, \zeta_h) \in P_2 \times P_2$ SUPG\\
\hline
 lower ref. & 0.223 & 0.293 & 0.840 & 0.856\\
 higher ref. & 0.647 & 0.676 & 0.950 & 0.960\\
 \hline
\end{tabular}
\vspace{-1mm}
\caption{Evolution of the relative total available temperature \eqref{rel_tot_t} over time for tokamak equilibrium test case. Blue, brown, green and purple curves correspond to discretizations \eqref{CG_straight} with polynomial degree $k=2$, \eqref{CG_mixed_Gunter} with $k=2$, \eqref{SUPG_method} with $k=1$ and \eqref{SUPG_method} with $k=2$, respectively. The lower and higher refinement levels are denoted by solid and dashed lines, respectively. The table depicts the relative total available temperature at the last time step.}\label{tokamak_losses}
\end{center}
\end{figure}

We find that for the lower resolutions, the primal CG formulation \eqref{CG_straight} leads to a loss of approximately 78\% of the available total temperature, with some improvement when using the mixed formulation \eqref{CG_mixed_Gunter}, down to approximately 71\%. In contrast, the first and second order runs for the novel SUPG-based discretization \eqref{SUPG_method} lead to a significantly reduced loss at approximately 16\% and 14\%, respectively. In other words, comparing the second order standard mixed and mixed SUPG-modified methods, we obtain a 5-fold decrease in temperature loss. Here, we note that the parallel conductivity's temperature-dependence is an important mechanism that amplifies spurious perpendicular diffusion across the separatrix. This is because in the course of spurious cross-diffusion, the temperature in the region just outside the separatrix is raised, thereby locally increasing the parallel conductivity. This in turn leads to an increase in spurious cross-diffusion, thereby creating a cycle whose effect is stronger for methods with a less accurate representation of the directional derivative along magnetic field lines. In particular, we expect this mechanism to also hold true in the case of a more realistic, lower initial background temperature outside of the separatrix.

All four runs improve for the higher resolution, with a reduced loss of approximately 35\% and 32\% for the standard primal and mixed CG discretizations, respectively. However, this is still significantly larger than the losses reported for the SUPG-based discretizations run at the lower refinement level. For the higher refinement level, the SUPG-based runs lead to a loss of only approximately 5\% and 4\% respectively. This time, comparing the second order standard mixed and mixed SUPG-modified methods, we obtain an 8-fold decrease in temperature loss, from slightly less than a third down to a few percent. Further, in a run not shown here, the second order primal CG discretization run at the next higher (poloidal) refinement leads to a loss of approximately 17\%, which is still higher than the loss of the second order SUPG-based discretization run at the lowest refinement level (i.e. a quarter of the resolution).

Overall, the standard primal and mixed CG discretizations lead to losses that may not be deemed acceptable for magnetohydrodynamic simulations run for dissipative time scales, while the SUPG-modified discretizations -- especially the second order one -- lead to more reasonable losses as low as 4\%. Here, we note that the higher refinement level leads to approximately 30.5k degrees of freedom for the CG space $P_2$. Extrapolating to a full tokamak mesh (with $8\times3 = 24$ layers, i.e. 48 degrees of freedom in the toroidal direction), this corresponds to only $244k$ degrees of freedom for the temperature space. In particular, we expect a) an even better performance of our novel SUPG-based discretization in the case of higher resolutions and a mesh aligned with the initial conditions, and b) an improved performance in the case of more complex dynamical scenarios, given these coarse resolution results with a mesh not aligned with the magnetic flux surfaces, as well as the above magnetic flux surface results.
\begin{remark}[\textit{Higher order discretizations}]
Next to the first and second polynomial order runs shown in Figure \ref{tokamak_losses}, we also performed a run with the primal weak formulation \eqref{CG_straight} on the lower refinement mesh, with a CG function space that is second and fourth order in the toroidal and poloidal components, respectively. In particular, this leads to the same number of degrees of freedom as the higher refinement runs of Figure \ref{tokamak_losses}. In this case, we found a relative total available temperature of approximately $0.901$ at the last time step. In other words, the fourth order primal CG discretization leads to a loss of approximately 10\% in temperature, compared to only approximately 5\% for the lowest polynomial order mixed SUPG-based method at the same resolution.
\end{remark}
%
\subsection{Tokamak perturbation} Next to equilibrium scenarios for tokamak domains, we can also consider temperature perturbations that lead to a non-steady field evolution. For this purpose, we consider the same toroidally periodic extruded domain and magnetic field as for the equilibrium case. This time, however, we set the initial temperature equal to $T \equiv 0.5$ throughout, and apply a perturbation over time through an axisymmetric forcing term, given by
\begin{align}
&S(t) = S_p(R,Z)S_t(t), \\
&S_t(t) = \left(\tanh\big((t - t_s)/t_\sigma\big) +1\right)/2
&S_p =
\begin{cases}
s_m \big(1 + \cos(f_\rho \pi \rho)\big)/2 &\rho < 1/f_\rho, \\
0 &\rho > 1/f_\rho,
\end{cases} \label{pert_sexpr}
\end{align}
where $t_s = 40$, $t_\sigma = 15$ specify the forcing's growth over time, and $s_m = 1$, $f_\rho = 7$, $\rho = \sqrt{(R-R_0)^2 + (Z-Z_0)^2}$ specify its location in space, for $RZ$-coordinate $(R_0, Z_0)$. In our case, we set $(R_0, Z_0)$ to a particular mesh vertex location at approximately $(2.4, -0.22)$; see Figure \ref{tokamak_pert} (noting that we consider a tokamak domain that is non-dimensionalized with length scale $L_0 = 2$). The parallel conductivity is set to a fixed value of $\kappa_\parallel = 10$, and further $\kappa_\perp = 0$. As before, we start with a time step of $\delta t = 10^{-5}$, which this time is increased linearly to $\delta t = 0.02$ over 100 time steps. The simulation is run up to $t_{max} = 120$ Alfven times, using the mesh as depicted in Figure \ref{tokamak_setup}. Since there is no perpendicular diffusion, the temperature perturbation should be diffused strictly along magnetic field lines which form a magnetic flux tube, and in Figure \ref{tokamak_pert}, we marked two approximate bounding surfaces, which together form a volume that contains 95\% of the perturbation. In the course of the simulation, we expect the perturbation to steadily be diffused throughout the tube it lies in. Results for the standard mixed and SUPG discretizations  \eqref{CG_mixed_Gunter}, \eqref{SUPG_method}, respectively, are depicted in Figure \ref{tokamak_pert}, noting that the primal discretization \eqref{CG_straight} leads to a qualitatively similar result as \eqref{CG_mixed_Gunter}.
\begin{figure}[ht]
\begin{center}
\includegraphics[width=0.99\textwidth]{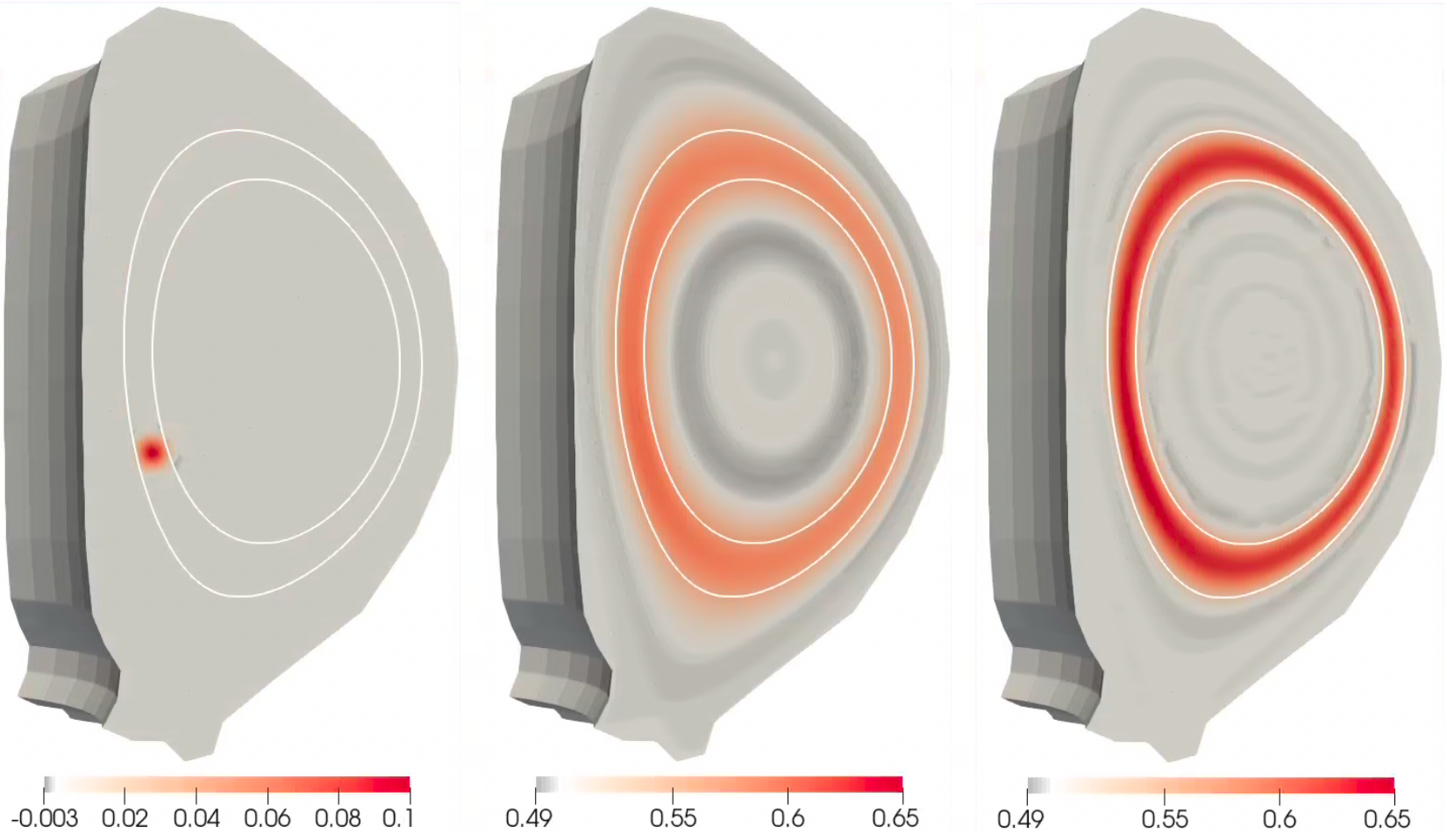}
\vspace{-2mm}
\caption{Images for perturbation test \eqref{pert_sexpr}, at $t \approx 120$ Alfven times. Left: forcing $S(t)$. Center and right: temperature field for standard mixed and SUPG discretizations  \eqref{CG_mixed_Gunter}, \eqref{SUPG_method}, respectively. White lines denote two magnetic flux surfaces forming a magnetic flux tube that contains 95\% of the temperature generated by $S(t)$.} \label{tokamak_pert}
\end{center}
\end{figure}

We find that as expected, the temperature perturbation generated by
the forcing term $S(t)$ is spread across the region encompassed by the
two aforementioned bounding flux surfaces. For the standard mixed
discretization \eqref{CG_mixed_Gunter}, we find a significant amount
of cross-diffusion, leaking temperature across the two bounding flux
surfaces. For the SUPG-based discretization, this is far less so the
case, and the temperature is well contained between the bounding
surfaces. In particular, both discretizations conserve the total temperature
up to boundary conditions\footnote{In the absence of boundary conditions, we can set $\gamma \equiv 1$ in \eqref{CG_mixed_Gunter_T} and \eqref{SUPG_method_T} and obtain $\tfrac{d}{dt}\int_\Omega T_h \; dx = \int_\Omega S \; dx$.}, and we find a maximum temperature of
approximately $0.65$ for the SUPG setup, compared to only $0.6$ for
the standard mixed one. Finally, both methods lead to areas with a
temperature less than the background value $0.5$; this is likely due
to small negative values in the discrete forcing term, and especially
the choice of relatively coarse time step and midpoint-rule for the
time discretization.
%
\section{Conclusion} \label{sec_conclusion}
In this work, we presented a novel CG-based spatial discretization for
the anisotropic heat conduction equation in the context of magnetic
confinement fusion. It contains an auxiliary variable that serves as a
form of the temperature's directional derivative, and further includes
an SUPG-stabilization method to reduce spurious heat flux across
magnetic field lines. We proved the method's consistency with respect
to strong solutions of the anisotropic heat flux equation, and further
motivated and discussed our choice of SUPG formulation, leading to a
parabolic character of the discretization up to consistency-related
modifications.

In numerical tests, we verified the spatial discretization's order of accuracy. Further, we considered a test case representing the spread of a temperature perturbation along a 2D magnetic flux surface with rational winding number, demonstrating a superior performance of our novel method when compared to existing CG-based discretizations. Finally, we considered two realistic tokamak scenarios representing a magnetohydrodynamic equilibrium and a perturbation spreading inside a magnetic flux tube, demonstrating a comparably far smaller spurious cooling rate and a more confined temperature spread, respectively, at time scales of interest for resistive magnetohydrodynamic simulations.

Possible future extensions to this work include studies in the context of anisotropic heat flux as part of a magnetohydrodynamic model, examining the discretization's accuracy when the temperature and magnetic fields are subject to fluid flow. Further, we are working on efficient solvers for our novel discretization in conductivity and time step regimes typically considered for tokamak simulations.

\subsection*{Acknowledgments}
This work was supported by the Laboratory Directed Research and Development program of Los Alamos National Laboratory, under project number 20240261ER, as well as the U.S. Department of Energy Office of Fusion Energy Sciences Base Theory Program, at Los Alamos National Laboratory under contract No. 89233218CNA000001. The computations have been performed using resources of the National Energy Research Scientific Computing Center (NERSC), a U.S. Department of Energy Office of Science User Facility operated under Contract No. DE-AC02-05CH11231. Further support was received by the Engineering and Physical Sciences Research Council (grant number EP/W029731/1). Los Alamos National Laboratory report number LA-UR-24-33208.
%
%
%
%
\appendix
\section{Non-dimensionalization} \label{App_kappas}
To derive a non-dimensionalized form of the anisotropic heat flux equation that is appropriate for the tokamak test cases considered in the numerical results section \ref{sec_Numerical_results}, we consider  parameter choices common for magnetohydrodynamic models. In such models the ion temperature equation may be given by \cite{jardin2010computational}
\begin{equation}
\frac{n_i}{\gamma - 1} \left(\pp{T_i}{t} + \mathbf{V}_i \cdot \nabla T_i\right) + n_i T_i \nabla \cdot \mathbf{V}_i - \nabla \cdot (\kappa_{i_\parallel} \nabla_\parallel T_i + \kappa_{i_\perp} \nabla_\perp T_i) = S, \label{p_eqn_full}
\end{equation}
for ion temperature $T_i$, ion density $n_i$, adiabatic index $\gamma = \tfrac{5}{3}$, ion velocity $\mathbf{V}_i$, ion conductivities $\kappa_{i_\parallel}$, $\kappa_{i_\perp}$, and forcing term $S$ containing additional contributions such as Ohmic heating, heat exchange, radiative cooling and external sources. The conductivites can be described by the Branginskii plasma model \cite{braginskii1965transport}, which leads to
\begin{equation}
\kappa_{i_\parallel} = \frac{3.9n_iT_i\tau_i}{m_i}, \hspace{1cm} \kappa_{i_\perp} =\frac{2 n_iT_i}{m_i \omega_i^2 \tau_i}. \label{kappa_braginskii}
\end{equation}
The additional quantities in \eqref{kappa_braginskii} denote the ion mass $m_i$, ion collisional time $\tau_i$ and ion gyrofrequency $\omega_i$, respectively. The latter two in turn are defined by
\begin{equation}
\tau_i = \frac{12\pi^{3/2}\epsilon_0^2 \sqrt{m_i} T^{3/2}}{n_i Z^4 e^4\ln\Lambda} \; \text{s}, \hspace{1cm} \omega_i = \frac{Z e|\mathbf{B}|}{m_i} \; \text{s}^{-1}, \label{tau_omega}
\end{equation}
for vacuum permittivity $\epsilon_0$, elementary charge $e$, Coulomb logarithm $\ln \Lambda$, and atomic number $Z$. For our numerical results in Section \ref{sec_Numerical_results}, we consider the following parameters and physical constants:
\begin{equation}
\begin{matrix*}[l]
&Z = 1, &\ln \Lambda \approx 15, &m_i =  1.673 \times 10^{-27} \; \text{kg}, \vspace{2mm}\\
&\epsilon_0 = 8.854 \times 10^{-12} \; \text{Fm}^{-1}, &e = 1.6 \times 10^{-19} \; \text{C}, &\mu_0 = 4\pi \times 10^{-7} \text{N A}^{-2},
\end{matrix*}
\end{equation}
where $\mu_0$ denotes the vacuum permeability and will be used further below.

We non-dimensionalize \eqref{p_eqn_full} with respect to typical MHD
equilibrium quantities in tokamaks, using a plasma chamber with a
prescribed minor radius $L_0$ and a typical ion density number
$n_0$. Further, we consider a magnetic field strength and plasma
pressure which are given by $B_0$ and $p_0$, respectively, at the
magnetic axis, as computed by a Grad-Shafranov equilibrium code. For
the equilibrium used in the tokamak test case in Section
\ref{sec_Numerical_results}, the quantities are given by
\begin{equation}
p_0 = 656 \; \text{kPa}, \;\;\; B_0 = 5.42 \; \text{T}, \;\;\; n_0 = 10^{20} \; \text{m}^{-3}, \;\;\; L_0 = 2\; \text{m}, \;\;\; t_A = 1.83 \times 10^{-7} \; \text{s}, \label{quantities_non_dim}
\end{equation}
where $t_A = L_0/v_A$ denotes the characteristic Alfven wave time
scale, for Alfven wave speed $v_A = B_0/\sqrt{\mu_0n_0m_i}$. These are
representative values for an ITER discharge. In the following, we will
assume for simplicity charge neutrality and further the ion and
electron temperature to be approximately equal, so that the total
plasma pressure at the magnetic axis can be written in terms of the ion
temperature $T_0$ according to
\begin{equation}
p_0 \approx (1 + Z)n_0 T_0 = 2n_0 T_0 \;\;\; \implies \;\;\; T_0 = 3.28 \times 10^{-15} \; \text{J},
\end{equation}
which corresponds to approximately $20.5$ keV at magnetic axis. Note
that as in the formulas above, $T_0$ is given in Joule (with $T_0 =
k_B T_K$, for Boltzmann constant $k_B$ and temperature in Kelvin
$T_K$). Given these choices, the above quantities \eqref{tau_omega},
\eqref{kappa_braginskii} evaluate to \begingroup
\addtolength{\jot}{2mm}
\begin{subequations}
\begin{align}
&\tau_i = \frac{12\pi^{3/2}\epsilon_0^2 \sqrt{m_i} T_0^{3/2}}{n_0 Z^4 e^4\ln\Lambda} \tilde{n}^{-1}\tilde{T}^{3/2} = \tau_{i_0} \tilde{n}^{-1}\tilde{T}^{3/2}, & \omega_i = \frac{Z e|\mathbf{B}_0|}{m_i} |\tilde{\mathbf{B}}| = \omega_{i_0} |\tilde{\mathbf{B}}|, \\
&\kappa_{i_\parallel} = \frac{3.9n_0T_0\tau_{i_0}}{m_i} \tilde{T}^{3/2} = \kappa_{i_{\parallel 0}}\tilde{T}^{3/2}, & \kappa_{i_\perp} =\frac{2 n_0T_0}{m_i \omega_{i_0}^2 \tau_{i_0}} \tilde{n}^2 \tilde{T}^{-1/2} |\tilde{\mathbf{B}}|^{-2} = \kappa_{i_{\perp0}},
\end{align}
\end{subequations}
\endgroup
where tildes denote non-dimensionalized quantities. Further, the system's characteristic collisional time, gyrofrequency and conductivities, which depend on physical constants as well as our choice of $Z$, $\ln \Lambda$, $B_0$, $T_0$ and $n_0$, evaluate to
\begingroup
\addtolength{\jot}{2mm}
\begin{subequations}
\begin{align}
&\tau_i \approx 4.1 \times 10^{-2} \; \text{s}, & \omega_i \approx 5.2 \times 10^8 \; \text{s}^{-1}, \\
&n_0^{-1} \kappa_{i_{\parallel 0}} \approx 3.13 \times 10^{11} \; \text{m}^2\text{s}^{-1}, & n_0^{-1} \kappa_{i_{\perp 0}} \approx 3.56 \times 10^{-4} \; \text{m}^2\text{s}^{-1}.
\end{align}
\end{subequations}
\endgroup
Altogether, the non-dimensionalized form of the ion temperature equation \eqref{p_eqn_full} reads
\begin{equation}
\frac{\tilde{n} n_0}{\gamma - 1} \frac{T_0}{t_A}\pp{\tilde{T}}{\tilde{t}}  - \frac{1}{L_0} \tilde{\nabla} \cdot \left(\kappa_{i_{\parallel 0}}  \; \tilde{T}^{5/2} \frac{1}{L_0} \tilde{\nabla}_{\parallel} (T_0\tilde{T}) +  \kappa_{i_{\perp 0}}  \; \tilde{n}^2 |\tilde{\mathbf{B}}|^{-2} \tilde{T}^{-1/2} \frac{1}{L_0} \tilde{\nabla}_{\perp} (T_0\tilde{T})\right) = \tilde{S}, \label{T_eqn_orig_nondim}
\end{equation}
noting that $\mathbf{b}$ in the directional gradients is transformed to $\mathbf{b} = \tilde{\mathbf{B}}/|\tilde{\mathbf{B}}|$. Multiplying \eqref{T_eqn_orig_nondim} by $(\gamma - 1) t_A/(T_0n_0)$, we then obtain
\begin{align}
\tilde{n} \pp{\tilde{T}}{\tilde{t}} - \tilde{\nabla} \cdot \left( K_\parallel \tilde{\kappa}_\parallel \; \tilde{\nabla}_\parallel \tilde{T} + K_\perp \tilde{\kappa}_\perp \;\tilde{\nabla}_\perp \tilde{T} \right) = \frac{(\gamma - 1)t_A}{T_0n_0} \tilde{S},
\end{align}
for conductivity parameters
\begingroup
\addtolength{\jot}{2mm}
\begin{align}
&K_\parallel = (\gamma - 1)\frac{t_A}{L_0^2 n_0/\kappa_{i_{\parallel_0}}} = (\gamma - 1)\frac{t_A}{t_\parallel}, & \tilde{\kappa}_\parallel = \tilde{T}^{5/2}, \hspace{23mm} \\
&K_\perp = (\gamma - 1)\frac{t_A}{L_0^2 n_0/\kappa_{i_{\perp 0}}} = (\gamma - 1)\frac{t_A}{t_\perp}, & \tilde{\kappa}_\perp = \tilde{n}^2 |\tilde{\mathbf{B}}|^{-2} \tilde{T}^{-1/2}, \hspace{10mm}
\end{align}
\endgroup
where $t_{\parallel}$ and $t_{\perp}$ denote diffusive time scales. In this work, we assume for simplicity a constant density field $\tilde{n} = 1$, leading to the non-dimensionalized anisotropic heat flux equation \eqref{T_eqn_orig}, with $\kappa_\parallel = K_\parallel \tilde{\kappa}_\parallel$ and $\kappa_\perp = K_\perp \tilde{\kappa}_\perp$.

For our choice of spacial and temporal scales $L_0$, $t_A$, respectively, from \eqref{quantities_non_dim}, we obtain
\begin{equation}\label{eq:kappa-high}
K_\parallel \approx 8.8 \times 10^3, \;\;\; K_\perp \approx 1 \times 10^{-11},
\end{equation}
and these will be our conductivity magnitudes at the magnetic axis, since our non-dimensionalization implies $\tilde{T} = |\tilde{\mathbf{B}}| = 1$ here. Conversely, we note that the smallest temperature values occur in the region between the separatrix and the plasma facing wall. A typical value in this region is given by $T_b = 10$ eV, corresponding to a factor of $T_b = 4.9 \times 10^{-4}$ of $T_0$. Additionally, towards the outer wall region (with respect to the radial coordinate $r$), we have $|\tilde{\mathbf{B}}| \approx 0.75$. Correspondingly, in this region, the conductivity magnitudes are given by
\begin{equation}
K_\parallel \tilde{\kappa}_\parallel = K_\parallel T_b^{5/2} \approx 4.7 \times 10^{-5}, \hspace{1cm} K_\perp \tilde{\kappa}_\perp = K_\perp |\tilde{\mathbf{B}}|^{-2} T_b^{-1/2} \approx 8 \times 10^{-10},
\end{equation}
which together with \eqref{eq:kappa-high} implies a parallel to perpendicular anisotropy ratio between approximately 5 and 15 orders of magnitude.

The underlying assumptions for the Braginskii
closure~\cite{braginskii1965transport} of parallel thermal conduction
do not hold in the high plasma temperature limit where the magnetic
field line length $L_c$ is shorter than the mean free path
$\lambda_\textrm{mfp}.$ This is when the formal perturbative expansion
of the distribution function in Braginskii's analysis no longer
applies because the Knudsen number $K\!n\equiv \lambda_\textrm{mfp}/L_c$
is large.  A straightforward scaling analysis of Braginskii's parallel
thermal conduction flux for electrons has $q_{e\parallel} =
\kappa_\parallel \nabla_\parallel T_e \sim n_e v_{th,e} T_e K\!n,$ with
$n_e$ the electron density, $v_{th,e}$ the electron thermal speed, and
$T_e$ the electron temperature.  In the collisional regime $K\!n \ll 1.$
When $K\!n$ reaches order unity or greater, the normal expectation is that
parallel heat flux would be bounded by electron thermal speed,
yielding the so-called flux limiting form of $q_{e\parallel} \sim n_e
v_{th,e} T_e.$~\cite{Bell-pof-1985} In a range of tokamak plasma
problems, more exotic parallel electron conduction physics can
appear~\cite{PhysRevLett.108.165005, Li_2023, Zhang_2023}. They all have
the effect of suppressing the parallel heat flux far below the
Braginskii values at high plasma temperature.  For our purposes, this
limiting behavior is mimicked by deploying the limiting function $f$
in \eqref{k_par_tokamak} in the numerical results section. With the
limiter in place, the maximum anisotropy ratio reduces to
approximately 12 orders of magnitude, which in practice may be further
reduced since with the aforementioned limitations, the perpendicular
conductivity may also be larger than predicted by the Braginskii model
because of plasma micro-turbulence. In general, the limiting case for
particle transport is complex, and our choice of limit $T_l$ in
\eqref{k_par_tokamak} is somewhat arbitrary and set roughly to the
temperature near the separatrix, noting that the separatrix marks a
transition zone from long mean free paths and high temperatures to
shorter mean free paths and low temperatures. In results not shown
here, we also performed a comparison without the use of a limiter
(i.e., such that $f(T) = T$ in \eqref{k_par_tokamak}), and found that
the standard primal and mixed CG discretizations' accuracy greatly
deteriorates. This is much less so the case for the SUPG-based one,
leading to a more favorable comparison than the one presented in the
numerical results section.

Finally, we note that the time scales of interest in MHD simulations
for tokamaks are typically inferred from dissipative dynamics,
including $t_{\kappa_\perp}$ and the plasma current decay time scale
$t_R$ due to resistivity. After non-dimensionalization, the latter
quantity is formulated through the so-called Lundquist number $S,$ which
denotes the ratio between $t_A$ and $t_R$. Depending on the underlying
physics assumptions for resistivity, $S$ may lie in the range $10^6$
-- $10^{12}$ \cite{jardin2010computational, park_plasma_1999}. Considering the lower end
of this range, we therefore have that given a non-dimensionalized unit
time scale of $1$ Alfven time, simulations of interest may be run up
to $t_{max} = 10^6$ Alfven times. In this case, the tokamak test case
in Section \ref{sec_Numerical_results} with its runtime of $t_{max} =
10^5$ Alfven times therefore corresponds to a simulation up to many growth times of
the resistive modes \cite{park_plasma_1999}.

%
%
\bibliographystyle{plain}
\bibliography{Anisotropic_CG_SUPG}
\end{document}